\tikzstyle{vertex}=[circle, draw, inner sep=0pt, minimum size=6pt]
\newtheorem{thm}{Theorem}[section]
\newtheorem{lemma}[thm]{Lemma}
\newtheorem{prop}[thm]{Proposition}
\newtheorem*{thm*}{Theorem}
\newtheorem*{lemma*}{Lemma}
\newtheorem*{prop*}{Proposition}
\newtheorem*{cor*}{Corollary}
\newtheorem*{conj*}{Conjecture}
\theoremstyle{definition}
\newtheorem{defn}[thm]{Definition}
\newtheorem{ques}[thm]{Question}
\newtheorem{ex}[thm]{Example}
\newtheorem*{ques*}{Question}
\title{Homomorphisms of Partial Fields}
\author{Nathaniel Vaduthala}
\date{\today}
\begin{document}

\maketitle

\begin{abstract}
    A partial field is an algebraic object that allows one to simultaneously abstract several different representability properties of matroids.
    In this paper we study partial fields as algebraic objects in their own right.
    We characterize the weak and strong characteristic sets of partial fields and show that the class of partial fields is not well-quasi ordered.
    We provide a new proof that the lift operator of a partial field is idempotent.
    We also provide a relation between the fundamental elements of a partial field and its Dowling lift, and show that the Dowling lift operator is idempotent. 
\end{abstract}

\section{Introduction}
Partial fields were first introduced by Semple and Whittle \cite{SEMPLE1996184} in order to provide a systematic generalization of various classes of matroids originating from matrix representations, such as regular matroids, representable matroids, or dyadic matroids. Pendavingh and van Zwam \cite{Pendavingh_2008, Pendavingh_2010} later built upon this theory of partial fields and in doing so were able to prove their Lift and Confinement Theorems, from which results such as Tutte's classification of regular matroids \cite{tutte_1965}, Whittle's classification of matroids representable over $GF(3)$ and other fields \cite{Whittle_1997}, and Vertigan's classification of golden ratio matroids \cite[Theorem 1.2.12]{vzThesis} directly follow. 

In this paper, we study partial fields as algebraic objects. 
Theorem~\ref{thm: characteristic sets} characterizes the sets that can arise as characteristic sets of partial fields,
and Theorem~\ref{thm: strong char sets} is the analogous result for strong characteristic sets.
The class of universal partial fields was conjectured to be well-quasi ordered in~\cite[Problem 3.4.6]{vzThesis}.
We do not resolve this conjecture, but we show that the class of \emph{all} partial fields is not well-quasi ordered in Theorem~\ref{thm: partial fields not wqo}. We provide a new proof that the lift operator of a partial field is idempotent in Proposition \ref{prop: lift is idempotent}. We also prove a relation between the fundamental elements of a partial field and the fundamental elements of its Dowling lift in Theorem \ref{thm: fundamental elem dowling}, and prove that the Dowling lift operator is idempotent in Theorem \ref{thm: dowling lift idempotent}.

\section{Background Information}\label{section: background}

We assume familiarity with ring theory and field theory. By a ring, we mean a commutative ring with unity.

\begin{defn}
    A \emph{partial field} $\mathbb{P} = (R, G)$ is a pair consisting of a commutative ring $R$ and a subgroup $G$ of the unit group $R^{\times}$ such that $-1 \in G$.
\end{defn}
We say that $p \in \mathbb{P}$, equivalently that $p$ is an \emph{element} of $\mathbb{P}$, if $p \in G$ or $p = 0$.

This definition of partial fields is equivalent to the one originally used by Semple and Whittle \cite{SEMPLE1996184}.

Although we will not be discussing any problems that directly involve representability of matroids over a partial field, we will still define it, as the notion will appear when providing motivation for ceertain problems.

\begin{defn}
    Given a partial field $\mathbb{P} = (R, G)$, a matrix $A$ is said to be a \emph{weak $\mathbb{P}$-matrix} if every entry of $A$ is in $R$ and every maximal minor of $A$ lies in $\mathbb{P}$. $A$ is said to be a \emph{strong $\mathbb{P}$-matrix} if every minor of $A$ lies in $\mathbb{P}$.
\end{defn}
Given a $r\times n$ weak $\mathbb{P}$-matrix $A$ with columns labelled by $[n]$, we can define a matroid $M(A)$ by its bases as follows
\begin{equation*}
    M(A) = ([n], \{X \subseteq [n] : |X| = r \text{ and } \det(A[X]) \neq 0\})
\end{equation*}
where $A[X]$ corresponds to the $r\times r$ submatrix of $A$ formed by restricting to the columns labelled by $X$. Every matroid that arises in this way is said to be \emph{weak $\mathbb{P}$-representable}. We can define a matroid over a strong $\mathbb{P}$-matrix in a similar way, and any matroid that arises from a strong $\mathbb{P}$-matrix is said to be \emph{strong $\mathbb{P}$-representable}. It is immediate that every matroid that is strong $\mathbb{P}$-representable is weak $\mathbb{P}$-representable, but the converse is true as well \cite[Proposition 2.3.3]{vzThesis}. Therefore, we will make no distinction of weak or strong $\mathbb{P}$-representability and instead simply say that a matroid is \emph{$\mathbb{P}$-representable}. 

Partial field rerepresentability for matroids generalizes several matroid repsentability concepts. For example, a matroid is representable over the partial field $(\mathbb{F}, \mathbb{F}^{\times})$ if and only if it is representable over the field $\mathbb{F}$; a matroid is representable over the partial field $\mathbb{U}_0 := (\mathbb{Q}, \{-1, 0, 1\})$ if and only if it is regular; and a matroid  representable over the partial field $\mathbb{D} := (\mathbb{Z}\left[ \frac{1}{2}\right], \langle -1, 2\rangle)$ if and only if it is dyadic.

One importance of the equivalence between weak and strong $\mathbb{P}=(R, G)$-representability is that it tells us that the restriction of the elements of $R$ to the elements of $\mathbb{P}$ is what directs matroid representability, and this serves as motivation for the definition of \emph{partial-field homomorphisms} and \emph{isomorphisms}, which we now state.
\begin{defn}
    Let $\mathbb{P}_1 = (R_1, G_1)$ and $\mathbb{P}_2 = (R_2, G_2)$ be partial fields. A function $\phi : \mathbb{P}_1 \to \mathbb{P}_2$ is a \emph{partial-field homomorphism} if 
    \begin{itemize}
        \item $\phi(1) = 1$
        \item for all $p, q \in \mathbb{P}_1$, $\phi(pq)=\phi(p)\phi(q)$
        \item for all $p, q, r \in \mathbb{P}_1$ such that $p+q=r$, $\phi(p)+\phi(q) = \phi(r)$
    \end{itemize}
    $\phi$ will be an \emph{isomorphism} if it is a homomorphism that satisfies the additional requirements
    \begin{itemize}
        \item $\phi$ is a bijection
        \item $\phi(p) + \phi(q) \in \mathbb{P}_2$ if and only if $p+q \in \mathbb{P}_1$
    \end{itemize}
\end{defn}

\begin{ex}
    Given a ring homomorphism $\phi : R \to R'$, the restriction $\phi|_{R^{\times}} : (R, R^{\times}) \to (R', (R')^{\times})$ is a partial-field homomorphism.
\end{ex}

One connection between partial field homomorphisms and matroid representability is that if a matroid $M$ is $\mathbb{P}$-representable, and if there exists a partial-field homomorphism $\phi : \mathbb{P} \to \mathbb{P}'$, then $M$ will be $\mathbb{P}'$-representable as well \cite[Corollary 2.9]{Pendavingh_2008}. It is because of this that we can immediately determine that every regular matroid is representable over every field \cite[Lemma 2.5.2]{Pendavingh_2008} and every dyadic matroid is representable over every field with characteristic that is not two \cite[Lemma 2.5.5]{vzThesis}.

Given partial fields $\mathbb{P}_1 = (R_1, G_1)$ and $\mathbb{P}_2 = (R_2, G_2)$, if $\phi : R_1 \to R_2$ is a ring homomorphism such that $\phi(G_1) \subseteq G_2$, then $\phi$ is a partial-field homomorphism $\phi : \mathbb{P}_1 \to \mathbb{P}_2$. This also holds if we replace \emph{homomorphism} will \emph{isomorphism}. Whenever a partial-field homomorphism (resp. isomorphism) arises in this way, we shall refer to it as a \emph{strong partial-field homomorphism} (resp. \emph{isomophism}). 
Not every partial-field homomorphism arises in this way.

\begin{ex}[{\cite[Example 2.2.5]{vzThesis}}]
    Let $R = \mathbb{F}_2 \times \mathbb{F}_3$, and let $\mathbb{P} := (R, R^{\times})$. Then we can define a partial-field homomorphism $\phi : \mathbb{P} \to \mathbb{U}_0$ by $\phi(0, 0) = 0$, $\phi(1, 1) = 1$, and $\phi(1, -1) = -1$. In particular, this is a partial-field isomorphism that cannot be extended to a ring homomorphism.
\end{ex}

As with any well-behaved algebraic object, compositions of partial field homomorphisms are again homomorphisms.

\begin{prop}\label{prop: composition of partial-field hom is hom}
    Let $\phi_1 : \mathbb{P}_1 \to \mathbb{P}_2$ and $\phi_2 : \mathbb{P}_2 \to \mathbb{P}_3$ be partial-field homomorphisms. Then $\phi_2 \circ \phi_1 : \mathbb{P}_1 \to \mathbb{P}_3$ is a partial-field homomorphism.
\end{prop}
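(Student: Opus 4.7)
The plan is to verify the three defining axioms of a partial-field homomorphism for the composition $\phi_2 \circ \phi_1$, using the fact that each of $\phi_1$ and $\phi_2$ already satisfies them. Two of the three axioms (sending $1$ to $1$ and respecting multiplication) are purely formal and will follow by direct substitution. The only point requiring care is the additive axiom, because of the partial nature of addition in $\mathbb{P}_2$.

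First I would handle the easy axioms. For the unit, $\phi_2(\phi_1(1)) = \phi_2(1) = 1$. For multiplicativity, given $p,q \in \mathbb{P}_1$, since $\phi_1$ is a partial-field homomorphism $\phi_1(p),\phi_1(q),\phi_1(pq) \in \mathbb{P}_2$ and $\phi_1(pq) = \phi_1(p)\phi_1(q)$; applying $\phi_2$ (multiplicativity of $\phi_2$) then yields $\phi_2(\phi_1(pq)) = \phi_2(\phi_1(p))\phi_2(\phi_1(q))$, as required.

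The key step is the additive axiom. Suppose $p,q,r \in \mathbb{P}_1$ with $p+q = r$. The hypothesis on $\phi_1$ gives $\phi_1(p) + \phi_1(q) = \phi_1(r)$, where every term lies in $\mathbb{P}_2$. This is precisely the setup needed to invoke the additive axiom for $\phi_2$: setting $p' := \phi_1(p)$, $q' := \phi_1(q)$, $r' := \phi_1(r)$, all three are elements of $\mathbb{P}_2$ and satisfy $p' + q' = r'$, so $\phi_2(p') + \phi_2(q') = \phi_2(r')$. Translating back, $\phi_2(\phi_1(p)) + \phi_2(\phi_1(q)) = \phi_2(\phi_1(r))$, which is what we wanted.

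The only conceptual obstacle is the one just addressed: without knowing a priori that $\phi_1(p) + \phi_1(q)$ is a legitimate sum in $\mathbb{P}_2$ (i.e., equals some element of $\mathbb{P}_2$), one could not apply the additive axiom of $\phi_2$. This is guaranteed precisely by the witness $\phi_1(r) \in \mathbb{P}_2$ furnished by the additive axiom for $\phi_1$. Apart from this, the proof is a short direct verification.
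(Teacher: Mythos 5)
Your proposal is correct and follows essentially the same route as the paper: a direct verification of the three axioms, with the additive axiom of $\phi_2$ applied to the witness $\phi_1(p)+\phi_1(q)=\phi_1(p+q)$ supplied by the additive axiom of $\phi_1$. Your explicit remark that this witness is what licenses applying $\phi_2$'s additive axiom is the same point the paper's computation relies on, just stated more carefully.
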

\begin{proof}
    We have that $(\phi_2 \circ\phi_1)(1) = \phi_2(1) = 1$.

    For any $p, q \in \mathbb{P}_1$, we have 
    \begin{equation*}
        (\phi_2 \circ\phi_1)(pq) = \phi_2(\phi_1(p)\phi_2(q)) = \phi_2(\phi_1(p))\phi_2(\phi_1(q)) = (\phi_2 \circ\phi_1)(p) \cdot (\phi_2 \circ\phi_1)(q)
    \end{equation*}

    For any $p, q\in \mathbb{P}_1$ where $p+q\in \mathbb{P}_1$, we have
    \begin{align*}
        (\phi_2 \circ\phi_1)(p) + (\phi_2 \circ\phi_1)(q) &= \phi_2(\phi_1(p)) + \phi_2(\phi_1(q))\\
        &= \phi_2(\phi_1(p) + \phi_1(q))\\
        &= \phi_2(\phi_1(p+q))\\
        &= (\phi_2 \circ\phi_1)(p+q)\qedhere
    \end{align*}
\end{proof}

\section{Homomorphisms to Fields}\label{section: homomorphisms}

In this section, we discuss two problems related to homomorphisms from partial fields to fields and provide answers to them.

\subsection{Characteristic Sets}

The \emph{linear characteristic set} of a matroid $M$ is the set of all characteristics of fields in which $M$ is representable over. This can be adopted to the partial field setting. 

\begin{defn}
    Let $\mathcal{P} = \{0\} \cup \{p : p \text{ is prime}\}$ be the set of all primes, along with $0$. The \emph{(weak) characteristic set} of a partial field $\mathbb{P}$ is defined as
\begin{equation*}
       \chi(\mathbb{P}) =  \left\{ \, p \in \mathcal{P} \:\middle|\;
    \begin{array}{l}
        \text{there is a field  $\mathbb{F}$ of characteristic $p$ such that} \\
        \text{there is a partial-field homomorphism $\mathbb{P} \to \mathbb{F}$}
    \end{array}
    \right\}
\end{equation*}
\end{defn}

The following question about characteristic sets of partial fields was posed in~\cite{vzThesis}.
One of the main results of this section is an answer.

\begin{ques}[{\cite[Problem 2.8.5]{vzThesis}}]\label{ques: characteristic}
For which subsets $S \subseteq \mathcal{P}$ does there exist a partial field $\mathbb{P}$ such that $\chi(\mathbb{P}) = S$?
\end{ques}

Part of our answer to Question \ref{ques: characteristic} will involve a similar argument seen in \cite[Theorem 1]{LEADER1989315} that proves that if the linear characteristic set of a matroid contains infinitely many primes, then it contains $0$. This argument invokes \emph{ultrafilters} and \emph{ultraproducts} which are model-theoretic constructions.
We briefly provide the relevant information involving these structures now.
For more information see \cite[Chapter 5]{handbookanalysis}.

\begin{defn}
    Given a set $I$, a set $\mathcal{U} \subseteq 2^I$ is a \emph{non-principal ultrafilter over $I$} if it satisfies the following conditions
    \begin{itemize}
        \item if $A \in \mathcal{U}$ and $A \subseteq B$, then $B \in \mathcal{U}$
        \item if $A, B \in \mathcal{U}$ then $A \cap B \in \mathcal{U}$
        \item for all $A \subseteq I$, either $A \in \mathcal{U}$ or $I\setminus A \in \mathcal{U}$
        \item $\bigcap_{U \in \mathcal{U}}U = \varnothing$
    \end{itemize}
\end{defn}

\begin{defn}
    Let $(A_i)_{i \in I}$ be a collection of nonempty sets. Let $\mathcal{U}$ be an ultrafilter on the index set $I$. Given two elements $f = (f_i)_{i\in I}$ and $g = (g_i)_{i \in I}$ in the product $\prod_{i \in I}A_i$, we say that $f \sim_{\mathcal{U}} g$ if $\{i \in I : f_i = g_i\} \in \mathcal{U}$. The \emph{ultraproduct} is the set $\prod_{i \in I}A_i$ modulo the relation $f \sim_{\mathcal{U}} g$. We denote it by $\prod_{i \in I}A_i / \mathcal{U}$.
\end{defn}

Ultraproducts are relevant because of the following theorem.

\begin{thm}[{\cite[Theorems 2.1.5 and 2.4.1]{schoutens_ultraprod}}]\label{thm: ultraproduct of fields}
    Let $(\mathbb{F}_i)_{i \in I}$ be an infinite collection of fields and let $\mathbb{F} := \prod_{i \in I}\mathbb{F}_i/\mathcal{U}$ be an ultraproduct such that $\mathcal{U}$ is a non-principal ultrafilter on $I$. Then $\mathbb{F}$ is a field. Moreover, if for each prime $p$, if only finitely many $\mathbb{F}_i$ have characteristic $p$, then the characteristic of $\mathbb{F}$ is $0$.
\end{thm}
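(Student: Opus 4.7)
The plan is to verify the field axioms on the ultraproduct coordinatewise and then to derive the characteristic claim from a key lemma saying that no finite set belongs to a non-principal ultrafilter.

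For the first assertion, I would define $[f]+[g] := [(f_i+g_i)_{i\in I}]$ and $[f]\cdot[g] := [(f_ig_i)_{i\in I}]$. These are well defined because if $f \sim_{\mathcal{U}} f'$ and $g \sim_{\mathcal{U}} g'$, then the agreement set for the sums (resp.\ products) contains the intersection of two members of $\mathcal{U}$ and hence lies in $\mathcal{U}$. All commutative-ring identities hold coordinatewise on $I \in \mathcal{U}$ and therefore descend to $\mathbb{F}$; the zero and unit classes $[(0)_i]$ and $[(1)_i]$ are distinct because, as is standard for ultrafilters, $\varnothing \notin \mathcal{U}$. For multiplicative inverses, if $[f] \neq [0]$ then $\{i : f_i = 0\} \notin \mathcal{U}$, so the complement axiom gives $A := \{i : f_i \neq 0\} \in \mathcal{U}$; defining $g_i := f_i^{-1}$ on $A$ and $g_i := 0$ elsewhere, we get $\{i : f_ig_i = 1\} \supseteq A$, so $[f][g] = [1]$.

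For the second assertion, the key observation is that $\mathbb{F}$ has characteristic $p > 0$ exactly when $p\cdot[1] = [0]$, i.e.\ when $S_p := \{i : p \cdot 1_{\mathbb{F}_i} = 0\} \in \mathcal{U}$. Since each $\mathbb{F}_i$ is a field and $p$ is prime, $S_p$ coincides with $\{i : \operatorname{char}(\mathbb{F}_i) = p\}$, which is finite by hypothesis. So the whole statement reduces to the fact that a non-principal ultrafilter contains no finite set.

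I expect this finite-set exclusion to be the main obstacle. The idea is: suppose for contradiction that some finite $F \in \mathcal{U}$. Iteratively apply the complement axiom to each $i \in F$, intersecting $F$ with $I \setminus \{i\}$ whenever the latter lies in $\mathcal{U}$; either some singleton $\{i\}$ ends up in $\mathcal{U}$, or the process terminates with $\varnothing \in \mathcal{U}$, which is impossible. In the surviving singleton case, closure under intersection gives $U \cap \{i\} \in \mathcal{U}$ (hence nonempty) for every $U \in \mathcal{U}$, forcing $i \in U$ for all $U \in \mathcal{U}$ and thus $i \in \bigcap_{U \in \mathcal{U}} U$, contradicting non-principality. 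With this lemma in hand, no $S_p$ lies in $\mathcal{U}$, and $\mathbb{F}$ has characteristic $0$.
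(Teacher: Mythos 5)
Your proposal is correct, and it is genuinely more self-contained than what the paper does: the paper offers no proof of this statement at all, importing it as a black box from Schoutens, where the field property and the characteristic computation are usually obtained as instances of {\L}o\'s's theorem on the transfer of first-order sentences to ultraproducts. Your argument replaces that model-theoretic machinery with a direct verification: coordinatewise well-definedness of the operations via closure under intersection, inverses via the complement axiom applied to the zero set of a representative, and the characteristic claim via the single combinatorial lemma that a non-principal ultrafilter contains no finite set. Your proof of that lemma (if every $I\setminus\{i\}$ with $i\in F$ lies in $\mathcal{U}$ then finite intersection closure puts $\varnothing$ in $\mathcal{U}$; otherwise some singleton lies in $\mathcal{U}$ and intersecting it with every member of $\mathcal{U}$ contradicts $\bigcap_{U\in\mathcal{U}}U=\varnothing$) is exactly right, as is the reduction of the characteristic statement to it via $S_p=\{i:\operatorname{char}(\mathbb{F}_i)=p\}$. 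The one point worth making explicit: your argument repeatedly uses $\varnothing\notin\mathcal{U}$ (for $[0]\neq[1]$, for reflexivity giving $I\in\mathcal{U}$, and twice in the finite-set lemma), and you are right to insist on it --- note that the definition of non-principal ultrafilter given in Section 2 of the paper does not literally exclude $\mathcal{U}=2^I$, under which the ultraproduct degenerates to the zero ring and the theorem fails; properness must be taken as part of the definition, as it is in the standard one you invoke. The elementary route costs you a page of routine verification but buys a proof readable without any model theory; the citation route is shorter but opaque to a reader who does not already know {\L}o\'s's theorem.
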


In order to use Theorem~\ref{thm: ultraproduct of fields},
we will need to know that non-principal ultrafilters indeed exist.
Zorn's Lemma allows one to construct an ultrafilter on any infinite set.

\begin{thm}[{\cite[Corollary 6.33]{handbookanalysis}}]\label{thm: ultrafilters exist}
    Assuming the \textbf{ZFC} set theory axioms, every infinite set has a non-principal ultrafilter.
\end{thm}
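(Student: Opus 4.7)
The plan is to use Zorn's Lemma applied to the poset of filters containing the Fréchet filter. Concretely, given an infinite set $I$, let $\mathcal{F} = \{A \subseteq I : I \setminus A \text{ is finite}\}$ be the Fréchet filter on $I$. This is easily checked to be a proper filter: closed upward, closed under finite intersection, does not contain $\varnothing$ (because $I$ is infinite), and its overall intersection is empty (since for each $i \in I$ the cofinite set $I \setminus \{i\}$ lies in $\mathcal{F}$).

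Next, I would consider the poset $\mathcal{P}$ of all proper filters on $I$ containing $\mathcal{F}$, ordered by inclusion. Any chain $(\mathcal{G}_\alpha)$ in $\mathcal{P}$ has its union as an upper bound, and one checks routinely that the union of a chain of proper filters is again a proper filter (upward closure is immediate; closure under binary intersection uses that the chain is totally ordered so both elements lie in some common $\mathcal{G}_\alpha$; properness is inherited). By Zorn's Lemma, $\mathcal{P}$ has a maximal element $\mathcal{U}$.

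The core step, and the part requiring the most care, is showing that $\mathcal{U}$ is an ultrafilter, i.e.\ that for every $A \subseteq I$ either $A \in \mathcal{U}$ or $I \setminus A \in \mathcal{U}$. Suppose neither holds. I would show that the family
\[
\mathcal{U}' = \{ B \subseteq I : B \supseteq A \cap U \text{ for some } U \in \mathcal{U} \}
\]
is a proper filter strictly containing $\mathcal{U}$, contradicting maximality. The only nontrivial point is properness: if $A \cap U = \varnothing$ for some $U \in \mathcal{U}$, then $U \subseteq I \setminus A$, forcing $I \setminus A \in \mathcal{U}$ by upward closure, contrary to assumption. This is the main obstacle, though it reduces to a short set-theoretic computation once set up correctly.

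Finally, non-principality of $\mathcal{U}$ is immediate: since $\mathcal{F} \subseteq \mathcal{U}$, for each $i \in I$ the cofinite set $I \setminus \{i\}$ lies in $\mathcal{U}$, so $i \notin \bigcap_{U \in \mathcal{U}} U$; hence $\bigcap_{U \in \mathcal{U}} U = \varnothing$. Combined with the other filter axioms, this establishes that $\mathcal{U}$ satisfies all four clauses of the definition of a non-principal ultrafilter, completing the proof.
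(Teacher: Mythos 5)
Your proof is correct and is the standard argument: extend the Fr\'echet filter to a maximal proper filter by Zorn's Lemma, use maximality to get the dichotomy $A \in \mathcal{U}$ or $I \setminus A \in \mathcal{U}$, and deduce non-principality from the presence of all cofinite sets. The paper does not actually prove this statement --- it cites it as a known result and remarks only that ``Zorn's Lemma allows one to construct an ultrafilter on any infinite set'' --- so your write-up supplies precisely the argument the paper alludes to, with the key properness check (if $A \cap U = \varnothing$ then $U \subseteq I \setminus A$, forcing $I \setminus A \in \mathcal{U}$) handled correctly.
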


With this, we can now provide an answer to Question \ref{ques: characteristic}.

\begin{thm}\label{thm: characteristic sets}
    Let $S \subseteq \mathcal{P}$, then $S$ is the characteristic set of a partial field if and only if $S$ is nonempty and either $0 \in S$, or $0 \not\in S$ and $S$ is finite.
\end{thm}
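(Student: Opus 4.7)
The plan is to prove the two directions separately. For the forward direction, $\chi(\mathbb{P})$ is nonempty for any partial field $\mathbb{P} = (R, G)$: choosing a maximal ideal $\mathfrak{m}$ of $R$ (which exists because $R$ is nonzero) yields a ring homomorphism $R \to R/\mathfrak{m}$ into a field, and this restricts to a partial-field homomorphism on $\mathbb{P}$. I will argue the remaining part contrapositively: if $0 \notin \chi(\mathbb{P})$ then $\chi(\mathbb{P})$ is finite. Suppose instead that $\chi(\mathbb{P})$ contains infinitely many primes $p_1, p_2, \ldots$, and fix for each $i$ a field $\mathbb{F}_i$ of characteristic $p_i$ together with a partial-field homomorphism $\phi_i : \mathbb{P} \to \mathbb{F}_i$. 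By Theorem~\ref{thm: ultrafilters exist} fix a non-principal ultrafilter $\mathcal{U}$ on $\mathbb{N}$, and form $\mathbb{F} := \prod_i \mathbb{F}_i / \mathcal{U}$, which by Theorem~\ref{thm: ultraproduct of fields} is a field of characteristic $0$. The diagonal map $\phi(p) := [(\phi_i(p))_i]_{\mathcal{U}}$ is a partial-field homomorphism $\mathbb{P} \to \mathbb{F}$ because all three axioms hold componentwise, so $0 \in \chi(\mathbb{P})$, a contradiction.

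For the backward direction I will construct, for each admissible $S$, an explicit partial field with $\chi(\mathbb{P}) = S$. The base cases are straightforward: $S = \{0\}$ is realized by $(\mathbb{Q}, \mathbb{Q}^{\times})$, since the integer chain $1, 2, \ldots, p$ lies in $\mathbb{P}$ and forces $\phi(p) = p \cdot 1_{\mathbb{F}}$ to be a unit in every target, ruling out characteristic $p$; and $S = \{p\}$ is realized by $(\mathbb{F}_p, \mathbb{F}_p^{\times})$, since the full $\mathbb{F}_p$-addition forces $p \cdot 1_{\mathbb{F}} = 0$. For general $S$ I will take $R_S$ to be $\mathbb{Z}$ when $0 \in S$, or the product $\prod_{p \in S} \mathbb{F}_p$ when $0 \notin S$, and then adjoin, for each prime $q \notin S$, a primitive $q$-th root of unity $\zeta_q$ together with the formal inverse $(\zeta_q - 1)^{-1}$. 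Setting $\mathbb{P} = (R_S, R_S^{\times})$, the multiplicative relation $\zeta_q^q = 1$ combined with $\phi(\zeta_q) \neq 1$ (forced by the additive relation $(\zeta_q - 1) + 1 = \zeta_q$ inside $\mathbb{P}$, together with the fact that $\zeta_q - 1$ is now a unit) rules out homomorphisms into any field of characteristic $q$, while the intended homomorphisms to characteristic $p \in S$ factor through an embedding of $R_S$ into an algebraic closure of $\mathbb{F}_p$ (or of $\mathbb{Q}$ in the $0 \in S$ case).

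The main obstacle will be this last step: ensuring that no partial-field homomorphism, not just no ring homomorphism, introduces an unwanted characteristic. The naive product construction $(\mathbb{F}_{p_1} \times \cdots \times \mathbb{F}_{p_k}, R^{\times})$ can collapse at the partial-field level; for instance $(\mathbb{F}_2 \times \mathbb{F}_3, R^{\times})$ is isomorphic to $\mathbb{U}_0$ as a partial field, so its characteristic set is all of $\mathcal{P}$ rather than $\{2, 3\}$. The cyclotomic adjunctions and the inverted $(\zeta_q - 1)$ are chosen precisely so that the additive relation $(\zeta_q - 1) + 1 = \zeta_q$ propagates the obstruction $\phi(\zeta_q) \neq 1$ to every partial-field homomorphism, not only to ring homomorphisms.
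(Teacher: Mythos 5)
Your forward direction (nonemptiness via a maximal ideal, and the ultraproduct argument showing that an infinite characteristic set must contain $0$) is correct and matches the paper's Cases 3 and 4. Your constructions for the backward direction take a genuinely different route: where the paper uses the localization $\mathbb{Z}[1/q : q \notin S]$ and its quotient by $(\prod_{p\in S}p)$, you adjoin a primitive $q$-th root of unity $\zeta_q$ together with $(\zeta_q-1)^{-1}$ for each prime $q\notin S$. For the case $0\in S$ this works and is arguably more robust than the paper's argument: the relation $(\zeta_q-1)+1=\zeta_q$, all three terms of which are units, forces $\phi(\zeta_q)\neq 1$, while $\phi(\zeta_q)^q=1$ forces $\phi(\zeta_q)=1$ in characteristic $q$; this excludes each $q\notin S$ without needing the integers $1,2,\dots,q$ to all be units (which they need not be in the paper's ring). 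Your observation that $(\mathbb{F}_2\times\mathbb{F}_3,R^{\times})\cong\mathbb{U}_0$ has characteristic set all of $\mathcal{P}$ is also a genuine and important point about why a naive product construction fails for weak homomorphisms.

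However, there is a real gap in the case $0\notin S$ with $|S|\geq 2$: nothing in your argument excludes characteristic $0$. The cyclotomic relations only obstruct the prime characteristics $q\notin S$; in a field of characteristic $0$ the image of $\zeta_q$ can simply be a primitive complex $q$-th root of unity, so $\phi(\zeta_q)\neq 1$ causes no contradiction. The usual argument that $m\cdot 1_{\mathbb{F}}=\phi(0)=0$ for $m=\prod_{p\in S}p$ is also unavailable, because in $\prod_{p\in S}K_p$ the integers $1,2,\dots,m$ are not all units, so the additive chain $1+1+\dots+1$ is not a chain of sums inside $\mathbb{P}$. This is not a formality: your own example $\mathbb{F}_2\times\mathbb{F}_3\cong\mathbb{U}_0$ shows that a product of positive-characteristic fields can admit a partial-field homomorphism to $\mathbb{Q}$, and $(\mathbb{F}_4\times\mathbb{F}_3,\text{units})$ likewise maps to $\mathbb{C}$ by sending $(\omega,1)\mapsto e^{2\pi i/3}$ and $(1,2)\mapsto -1$. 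Whether your partial field for $0\notin S$ admits a homomorphism to characteristic $0$ therefore depends delicately on which additive relations the large fields $K_p$ impose simultaneously; one can check, for instance, that $(\mathbb{F}_{16}\times\mathbb{F}_3,\text{units})$ does \emph{not} map to characteristic $0$, but already that verification requires a Zech-logarithm computation rather than a one-line observation. You need a separate argument (or a modified construction) proving that no partial-field homomorphism from your ring to a characteristic-$0$ field exists; as written, $\chi(\mathbb{P})=S$ is not established in this case.
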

\begin{proof}
\textbf{Case 1: $0 \in S$}

    We first consider when $0 \in S$. Let $A := \mathcal{P}\setminus S$, we define the partial field $\mathbb{P} := (R, R^{\times})$ where $R := \mathbb{Z}[1/q, q \in A]$. Then every element $r$ in $R$ can be written as 
    \begin{equation*}
        r = \frac{a}{s}, \quad a \in \mathbb{Z}, \quad s \in Q =\left\{ \prod_j q_j^{n_j} : q_j \in A, \, n_j \geq 0\right\}
    \end{equation*}
    Now, consider a $p \in S$ and consider a field $\mathbb{F}_p$ with characteristic $p$. We define a map $\phi_p : \mathbb{P} \to \mathbb{F}_p$ as follows
    \begin{equation*}
        \phi_p \left(\frac{a}{s}\right) = (a \mod(p))\cdot (s \mod(p))^{-1}
    \end{equation*}
    In the case of $p=0$, we define $x \mod(p) := x$. We first show that this map is well-defined, and then show it is a ring homomorphism, from which it follows it is a partial-field homomorphism. 

    Suppose $a/s = a'/s'$ in $R$, then there exists a $t \in Q$ such that $t(as' - a's) = 0$ in $\mathbb{Z}$. Reducing $\mod(p)$ gives us 
    \begin{equation*}
        (t\mod(p))\cdot ((a\mod(p))\cdot(s'\mod(p)) - (a'\mod(p))\cdot(s\mod(p)) = 0
    \end{equation*}
    Because $t \mod(p) \neq 0$, this implies
    \begin{equation*}
        (a\mod(p))\cdot(s'\mod(p)) - (a'\mod(p))\cdot(s\mod(p))=0
    \end{equation*}
    Rearranging this gives us
    \begin{equation*}
        \phi\left(\frac{a}{s} \right) = (a\mod(p))\cdot (s\mod(p))^{-1} = (a'\mod(p))\cdot (s'\mod(p))^{-1} = \phi\left(\frac{a'}{s'} \right)
    \end{equation*}
    Now we show it is a ring homomorphism. We first show additivity. Given $a/s, a'/s' \in R$, we have
    \begin{align*}
        \phi \left( \frac{a}{s} + \frac{a'}{s'}\right) &=\phi \left( \frac{as' + a's}{ss'}\right) \\
        &= ((as' + a's)\mod(p)) \cdot (ss' \mod(p))^{-1}
    \end{align*}
    using the fact that reduction $\mod(p)$ is a ring homomorphism, we can expand this term and simplify to obtain
    \begin{align*}
        ((as' + a's)\mod(p)) \cdot (ss' \mod(p))^{-1} &= (a\mod(p))\cdot (s\mod(p))^{-1}\\
        &+ (a'\mod(p))\cdot (s'\mod(p))^{-1} \\
        &= \phi \left( \frac{a}{s}\right) + \phi\left( \frac{a'}{s'}\right)
    \end{align*}
    Multiplicativity follows by a similar argument. 

    Now, if we consider an element $q \in A$ and then consider a field $\mathbb{F}_q$ of characteristic $q$, we can see there is no partial-field homomorphism $\psi : R \to \mathbb{F}_q$. If there was one, then we have $\psi(q) = q\cdot \psi(1_R) = 0$ in $\mathbb{F}_q$. But note that $q$ is a unit of $R$, and so $\psi(1_R) = \psi(q\cdot q^{-1}) = \psi(q)\cdot (q^{-1}) = 0$, which is a contradiction. 

\textbf{Case 2: $0 \not \in S$ and $S$ is nonempty and finite}

    We now consider the case when $0 \not\in S$ and $S$ is finite. We define $A = (\mathcal{P}\setminus\{0\})\setminus S$ as the nonzero values of $\mathcal{P}$ that are not in $S$. Consider the partial field $\mathbb{P} = (R, R^{\times})$, where $R' = \mathbb{Z}[1/q : q\in A]$, $m = \prod_{p \in S}p$, and $R = R'/(m)$. Similar to above, every element of $R'$ can be written as $a/s$, where $a \in \mathbb{Z}$ and $s$ can be written as a products of positive powers of the elements in $A$. Given $p \in S$, we let $\mathbb{F}_p$ be a field of characteristic $p$. We then define the map $\xi'_p : R' \to \mathbb{F}_p$
    \begin{equation*}
        \xi'_p\left(\frac{a}{s} \right) = (a\mod(p))\cdot(s\mod(p))^{-1}
    \end{equation*}
    We have that $(m) \subseteq \ker(\xi_p')$, and so there exists a unique induced ring homomorphism $\xi_p : R \to \mathbb{F}_p$. In particular, a residue class $[a/s]$ in $R$ gets mapped to
    \begin{equation*}
        \xi_p\left(\left[\frac{a}{s}\right] \right) = (a\mod(p))\cdot(s\mod(p))^{-1}
    \end{equation*}
    Now suppose there exists a partial field homomorphism $\xi : R \to \mathbb{F}_q$, where $\mathbb{F}_q$ is a field with characteristic $q \not\in S$. We then have $0_{\mathbb{F}_q} = \xi(0_R) = \xi(m) = m\cdot1_{\mathbb{F}_q}$. In order for this to be $0$, it must be that $q$ is a prime divisor of $m$, which implies then that $q \in S$. If $q=0$, then $m\cdot 1_{\mathbb{F}_q}$ can never be $0_{\mathbb{F}_q}$ then, which is a contradiction as well.

    \textbf{Case 3: $0 \not\in S$ and $S$ is infinite}

    We now suppose that $S$ is infinite and does not contain $0$, and we suppose there is a partial field $\mathbb{P}$ such that $\chi(\mathbb{P}) = S$. Consider the product $\prod_{p \in S}\mathbb{F}_p$, where $\mathbb{F}_p$ is a field of characteristic $p$ such that there is a partial-field homomorphism $f_p : \mathbb{P} \to \mathbb{F}_p$. We now define a map $\overline{f} : \mathbb{P} \to \prod_{p \in S}\mathbb{F}_p$ as $\overline{f} := (f_p)_{p \in S}$, which is a partial-field homomorphism by \cite[Lemma 2.18]{Pendavingh_2010}.
    
    By Theorem \ref{thm: ultrafilters exist}, there exists a non-principal ultrafilter $\mathcal{U}$ on $S$, and so the ultraproduct $\prod_{p \in S}\mathbb{F}_p / \mathcal{U}$ is a field by Theorem \ref{thm: ultraproduct of fields}. Moreover, the characteristic of this field will be $0$. We let $\pi :\prod_{p \in S}\mathbb{F}_p \to \prod_{p \in S}\mathbb{F}_p/\mathcal{U}$ be the canonical quotient map, and we consider the composition $\pi \circ \overline{f} : \mathbb{P} \to \prod_{p \in S}\mathbb{F}_p / \mathcal{U}$. Because this is the composition of partial-field homomorphisms, it is a partial-field homomorphism by Proposition \ref{prop: composition of partial-field hom is hom}, and so $0 \in S$, which is a contradiction.

    \textbf{Case 4: $S$ is empty}
    
    Assuming we are in \textbf{ZFC}, for every partial field $\mathbb{P}$, there exists a field $\mathbb{F}$ such that there is a homomorphism $\mathbb{P} \to \mathbb{F}$ \cite[Prop 2.2.6]{vzThesis}. Therefore, if $S \subseteq \mathcal{P}$ is the empty set, then there exists no partial field $\mathbb{P}$ such that $\chi(\mathbb{P}) = S$.
\end{proof}

A follow up question that can be asked is if the situation change if we restrict ourselves to only strong partial-field homomorphisms. As a reminder, $\phi : \mathbb{P} = (R, G) \to \mathbb{F}$ is a strong partial-field homomorphism if $\phi : R \to \mathbb{F}$ is a ring homomorphism and $\phi(G) \subseteq \mathbb{F}^{\times}$.

\begin{defn}
Given a partial field $\mathbb{P}$, the \emph{strong characteristic set} of $\mathbb{P}$ is defined as 
    \begin{equation*}
       \chi_{\text{strong}}(\mathbb{P}) =  \left\{ \, p \in \mathcal{P} \:\middle|\;
    \begin{array}{l}
        \text{there is a field  $\mathbb{F}$ of characteristic $p$ such that there} \\
        \text{is a strong partial-field homomorphism $\mathbb{P} \to \mathbb{F}$}
    \end{array}
    \right\}
\end{equation*}
\end{defn}

Theorem~\ref{thm: strong char sets} characterizes the subsets of $\mathcal{P}$ that can arise as strong characteristic sets:  they are exactly the sets that arise as (weak) characteristic sets of partial fields.
In order to prove it, we require Proposition~\ref{prop: ring implies partial} to reduce to a ring theoretic question, and a technical lemma about homomorphisms from product rings to fields.

\begin{prop} \label{prop: ring implies partial}
    Let $\mathbb{P} = (R, G)$ be a partial field and $\mathbb{F}$ be a field. Then $\phi: \mathbb{P} \to \mathbb{F}$ is a strong partial-field homomorphism if and only if $\phi : R\to \mathbb{F}$ is a ring homomorphism. 
\end{prop}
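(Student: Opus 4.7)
The plan is to unfold the two definitions and observe that, because the codomain is a field, the unit-preservation condition built into the definition of a strong partial-field homomorphism comes for free from the ring homomorphism axioms. One direction is essentially a tautology: by the definition given earlier in the paper, a strong partial-field homomorphism $\phi : \mathbb{P} \to \mathbb{F}$ (viewing $\mathbb{F}$ as the partial field $(\mathbb{F}, \mathbb{F}^{\times})$) is exactly a ring homomorphism $\phi : R \to \mathbb{F}$ satisfying the extra constraint $\phi(G) \subseteq \mathbb{F}^{\times}$. So the ``only if'' direction requires no work beyond quoting the definition.

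For the ``if'' direction, suppose $\phi : R \to \mathbb{F}$ is a ring homomorphism. The only thing left to verify before we can call $\phi$ a strong partial-field homomorphism is that $\phi(G) \subseteq \mathbb{F}^{\times}$. I would dispatch this in one line: for any $g \in G \subseteq R^{\times}$ there is an inverse $g^{-1} \in R$ with $gg^{-1} = 1_R$; applying $\phi$ gives $\phi(g)\phi(g^{-1}) = 1_{\mathbb{F}}$, so $\phi(g)$ is nonzero, and in a field every nonzero element is a unit. Hence $\phi(g) \in \mathbb{F}^{\times}$, as required.

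There is no real obstacle here; the content of the proposition is simply the observation that when the target is a field, the ``strong'' condition $\phi(G) \subseteq \mathbb{F}^{\times}$ is automatic for any ring homomorphism, so strong partial-field homomorphisms into fields coincide with ring homomorphisms into fields. The proposition is useful precisely because it lets later arguments (such as the strong characteristic set characterization in Theorem~\ref{thm: strong char sets}) reduce statements about strong partial-field homomorphisms to purely ring-theoretic questions.
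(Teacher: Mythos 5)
Your proposal is correct and matches the paper's proof: the forward direction is definitional, and the reverse direction follows because a unital ring homomorphism sends units to units (your explicit computation $\phi(g)\phi(g^{-1}) = 1_{\mathbb{F}}$ is exactly the one-line justification the paper leaves implicit).
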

\begin{proof}
    If $\phi : \mathbb{P} \to \mathbb{R}$ is a strong partial-field homomorphism then by definition $\phi : R \to \mathbb{F}$ is a ring homomorphism. 

    If $\phi : R \to \mathbb{F}$ is a ring homomorphism, because ring homomorphisms map units to units, for any $g \in G$, we have that $\phi(g) \in \mathbb{F}^{\times}$. 
\end{proof}

\begin{lemma}\label{prop: product ring}
     Let $R = \prod_{i \in I}R_i$ be a product ring and let $\mathbb{F}$ be a field. Then there is a homomorphism $\phi : R \to \mathbb{F}$ if and only if there exists an $R_j$ such that there is a homomorphism $\psi : R_j \to \mathbb{F}$.
\end{lemma}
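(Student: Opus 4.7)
My plan is to handle the easy direction by composition with a projection and to handle the forward direction using the idempotent decomposition of $R$.

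For $(\Leftarrow)$: given a ring homomorphism $\psi : R_j \to \mathbb{F}$, the composition $\psi \circ \pi_j : R \to \mathbb{F}$ with the canonical projection $\pi_j : R \to R_j$ is a ring homomorphism, and hence, by Proposition~\ref{prop: ring implies partial}, a strong partial-field homomorphism into $\mathbb{F}$.

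For $(\Rightarrow)$, let $\phi : R \to \mathbb{F}$ be a ring homomorphism. For each $i \in I$, let $e_i \in R$ be the idempotent with $1_{R_i}$ in coordinate $i$ and $0$ in every other coordinate. These satisfy $e_i^2 = e_i$, $e_i e_k = 0$ for $i \neq k$, and $\sum_{i \in I} e_i = 1_R$. Applying $\phi$: the relation $\phi(e_i)^2 = \phi(e_i)$ in the field $\mathbb{F}$ forces $\phi(e_i) \in \{0,1\}$; the orthogonality $\phi(e_i)\phi(e_k) = 0$ forces at most one $\phi(e_i)$ to equal $1$; and $\sum_{i \in I}\phi(e_i) = \phi(1_R) = 1$ forces exactly one $\phi(e_j)$ to equal $1$, with all others equal to $0$.

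With this distinguished index $j$, I would define $\psi : R_j \to \mathbb{F}$ by $\psi(s) := \phi(\iota_j(s))$, where $\iota_j : R_j \to R$ is the map placing $s$ in coordinate $j$ and $0$ elsewhere. The verification that $\psi$ is a ring homomorphism is routine: $\psi(1_{R_j}) = \phi(e_j) = 1$, additivity is inherited from $\iota_j$ and $\phi$, and multiplicativity follows from $\iota_j(a)\iota_j(b) = \iota_j(ab)$ in $R$. As a sanity check, $\phi$ itself factors as $\psi \circ \pi_j$, because for any $r \in R$ one has $\phi(r) = \phi(r) \phi(e_j) = \phi(r \cdot e_j) = \phi(\iota_j(r_j)) = \psi(\pi_j(r))$.

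The only conceptual step is the idempotent reduction that pins each $\phi(e_i)$ to $\{0,1\}$ and selects a unique $j$; once $j$ is located, the remainder is bookkeeping. The argument uses $\sum_i e_i = 1_R$ in $R$, so it is most naturally stated with $I$ finite, which is the regime in which the lemma is applied to Theorem~\ref{thm: strong char sets}.
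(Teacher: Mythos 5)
Your proof is correct and takes essentially the same route as the paper's: both directions hinge on the coordinate idempotents $e_i$, the orthogonality $e_i e_k = 0$ forcing a unique index $j$ with $\phi(e_j) \neq 0$, and the resulting factorization of $\phi$ through the $j$-th coordinate. Your closing caveat about finiteness of $I$ is well taken and goes beyond the paper --- both your argument and the paper's rely on $\sum_{i} e_i = 1_R$, which fails for infinite products (where the forward direction is in fact false, as ultraproduct quotients such as the one in Case~3 of Theorem~\ref{thm: characteristic sets} show), so the lemma should be stated for finite $I$.
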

\begin{proof}
    If for some $j$, $R_j$ is homomorphic to $\mathbb{F}$ with $\psi$ as witness, then if $r \in R$ is written as $r = (r_1, r_2, \dots)$ we define $\phi : R \to \mathbb{F}$ as $\phi(r) = \psi(r_j)$. 

    Now suppose there is a homomorphism $\phi : R \to \mathbb{F}$. We let $e_i = (0, 0, \dots, 0, 1, 0, \dots, 0)$ where the nonzero term is in the $i$th location. We first show there exists a unique $e_j$ such that $\psi(e_j) \neq 0$. Suppose such an $e_j$ did not exist, so for every $i$, $\phi(e_i) = 0$. Then we have
    \begin{equation*}
        1_{\mathbb{F}} = \phi(1_R) = \phi\left(\sum e_i\right) = \sum\phi(e_i) = 0_{\mathbb{F}}
    \end{equation*}
    which is a contradiction, so there exists an $e_j$ that does not map to $0$. We now show such an $e_j$ is unique. If there existed another $e_k$ such that $\phi(e_k) \neq 0$ where $k \neq j$, then we have
    \begin{equation*}
        \phi(e_j)\phi(e_k) = \phi(e_j e_k) = 0_{\mathbb{F}}
    \end{equation*}
    which contradicts the fact that every field is an integral domain. 

    Now, given an element $r\in R$, we can write $r$ as $r = \sum r_i e_i$, which implies $\phi(r) = \sum r_i\phi(e_i)$. Because every $\phi(e_i) = 0$ for $i \neq j$, we have that $\phi(r) = r_j\phi(e_j) = \phi(0, 0, \dots, 0, r_j, 0, \dots)$. Therefore, the restricted map $\psi|_{R_j} : R_j \to \mathbb{F}$ is a homomorphism.
\end{proof}

We are now ready to give our characterization of strong characteristic sets.

\begin{thm}\label{thm: strong char sets}
    Let $S\subseteq \mathcal{P}$, then there exists a partial field $\mathbb{P}$ such that $\chi_{\text{strong}}(\mathbb{P}) = S$ if and only if $S$ is nonempty and either $0 \in S$ or $0 \not\in S$ and $S$ is finite. 
\end{thm}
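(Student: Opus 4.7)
The plan is to leverage Proposition~\ref{prop: ring implies partial} throughout so that everything reduces to ring-theoretic statements, and to follow the structure of Theorem~\ref{thm: characteristic sets} closely. Two background facts I would use repeatedly are that strong partial-field homomorphisms compose (as compositions of ring homomorphisms) and that $\chi_{\text{strong}}(\mathbb{P}) \subseteq \chi(\mathbb{P})$ for every partial field.

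For the realization (``if'') direction I split on whether $0 \in S$. When $0 \in S$, I would use the same ring $R = \mathbb{Z}[1/q : q \in \mathcal{P}\setminus S]$ and partial field $\mathbb{P} = (R, R^{\times})$ from Case~1 of Theorem~\ref{thm: characteristic sets}: the maps $\phi_p$ produced there are literal ring homomorphisms, hence strong partial-field homomorphisms by Proposition~\ref{prop: ring implies partial}, and the obstruction argument that ruled out weak homomorphisms to $\mathbb{F}_q$ for $q \in \mathcal{P}\setminus S$ was already ring-theoretic. When $S$ is finite and $0 \notin S$, I would instead take $\mathbb{P} = (R, R^{\times})$ with $R = \prod_{p \in S}\mathbb{F}_p$ for chosen fields $\mathbb{F}_p$ of characteristic $p$; this is a valid partial field since $-1 \in R^{\times}$. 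Each coordinate projection is a ring homomorphism realizing $p \in S$, so $S \subseteq \chi_{\text{strong}}(\mathbb{P})$, and Lemma~\ref{prop: product ring} shows that any ring homomorphism from $R$ to a field $\mathbb{F}$ factors through some $\mathbb{F}_p$; since a field homomorphism is injective, $\mathbb{F}$ must have characteristic $p \in S$.

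For the converse direction I would rule out the two forbidden shapes of $S$ separately. When $S = \varnothing$, any partial field $\mathbb{P} = (R,G)$ has $R \neq 0$ (as $-1 \in R^{\times}$), so Zorn's Lemma supplies a maximal ideal $\mathfrak{m}$ and the quotient $R \to R/\mathfrak{m}$ is a ring homomorphism to a field, which by Proposition~\ref{prop: ring implies partial} is a strong partial-field homomorphism; hence $\chi_{\text{strong}}(\mathbb{P})$ is nonempty. When $S$ is infinite with $0 \notin S$, I would mimic Case~3 of Theorem~\ref{thm: characteristic sets}: for each $p \in S$ fix a strong homomorphism $f_p : \mathbb{P} \to \mathbb{F}_p$ (equivalently a ring homomorphism $R \to \mathbb{F}_p$), assemble the product ring homomorphism $\bar f : R \to \prod_{p \in S}\mathbb{F}_p$, and compose with the quotient to the ultraproduct $\prod_{p \in S}\mathbb{F}_p/\mathcal{U}$ for a non-principal ultrafilter $\mathcal{U}$ supplied by Theorem~\ref{thm: ultrafilters exist}. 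By Theorem~\ref{thm: ultraproduct of fields} this ultraproduct is a field of characteristic~$0$, so by Proposition~\ref{prop: ring implies partial} the composition witnesses $0 \in \chi_{\text{strong}}(\mathbb{P})$, contradicting $0 \notin S$.

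The main genuinely new ingredient compared to Theorem~\ref{thm: characteristic sets} is the product-ring construction realizing finite $S$; the rest of the argument is a direct translation into the strong setting, with Proposition~\ref{prop: ring implies partial} turning ``ring homomorphism'' into ``strong partial-field homomorphism'' at every step. The only real subtlety is confirming that the product-ring partial field has strong characteristic set \emph{equal} to $S$ and not larger, which is precisely what Lemma~\ref{prop: product ring} is designed to supply.
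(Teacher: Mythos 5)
Your proposal is correct. For the cases $S = \varnothing$, $S$ infinite without $0$, and $S$ finite without $0$, you follow essentially the same route as the paper: Proposition~\ref{prop: ring implies partial} to reduce everything to ring homomorphisms, a maximal-ideal quotient for the empty case, the ultraproduct argument for the infinite case, and a finite product of fields of the prescribed characteristics together with Lemma~\ref{prop: product ring} for the finite case. The one place you genuinely diverge is the case $0 \in S$: the paper again takes the product $\prod_{p \in S}\mathbb{Z}_{p}$ of prime subfields (now possibly an infinite product) and invokes Lemma~\ref{prop: product ring} to pin down exactly which fields receive a homomorphism, whereas you reuse the localization $\mathbb{Z}[1/q : q \in \mathcal{P}\setminus S]$ from Case~1 of Theorem~\ref{thm: characteristic sets} and observe that the witnessing maps $\phi_p$ constructed there are already ring homomorphisms, so $S \subseteq \chi_{\text{strong}}(\mathbb{P}) \subseteq \chi(\mathbb{P}) = S$. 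Your variant is arguably the safer one: the proof of Lemma~\ref{prop: product ring} writes $1_R = \sum_i e_i$, which only makes sense for finitely many factors, and the ``only if'' direction of that lemma is delicate for infinite products (homomorphisms from an infinite product of fields to a field correspond to ultrafilters on the index set, not only to coordinates), so the paper's Case~1 needs an extra word when $S$ is infinite; your localization argument sidesteps that issue entirely, at the cost of importing the well-definedness and homomorphism checks already carried out in the proof of Theorem~\ref{thm: characteristic sets}.
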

\begin{proof}
    We will let $p$ denote an arbitrary element of $S$.
    
    Given a set $S$, we define $\mathbb{P} = (R, G)$ where $R = \prod_{p_i \in S} \mathbb{Z}_{p_i}$, where $\mathbb{Z}_{p_i}$ denotes the unique prime subfield of characteristic $p_i$, and $G = \prod_{p_i \in S}\mathbb{Z}_{p_i}^{\times}$ is the unit group of $R$. 

    \textbf{Case 1: $0 \in S$}

    Suppose that $\mathbb{F}$ has characteristic $p_j$ for some $p_j \in S$. Then $\mathbb{Z}_{p_j}$ will be a subfield of $\mathbb{F}$ and so there is a homomorphism from $\mathbb{Z}_{p_j}$ to $\mathbb{F}$, which implies there is a homomorphism from $R$ to $\mathbb{F}$ as well by Lemma \ref{prop: product ring}.

    Now suppose we have a field $\mathbb{F}$ that does not have characteristic $p_j$ for some $p_j \in S$. Then $R$ cannot be homomorphic to $\mathbb{F}$, because if it is, then there is some $\mathbb{Z}_{p_j}$ that is homomorphic to $\mathbb{F}$, despite having a different characteristic. 

    Therefore, the only fields that have a ring homomorpism from $R$ are fields with characteristic in $S$. Therefore, $\chi(\mathbb{P}) = S$.
    
    \textbf{Case 2: $0 \not\in S$ and $S$ is nonempty and finite}
    
    The argument in Case 1 works here as well.

    \textbf{Case 3: $0 \not\in S$ and $S$ is infinite}
    
    The same argument as shown in Case 3 in the proof of Theorem \ref{thm: characteristic sets} also shows that if any partial field $\mathbb{P}$ had a strong characteristic set equal to $S$, then we can construct a field $\mathbb{F}$ with characteristic $0$ such that there is a strong partial-field homomorphism $\mathbb{P} \to \mathbb{F}$.
    
     \textbf{Case 4: $S$ is empty} 

     Assuming we are in \textbf{ZFC}, given a partial field $\mathbb{P} = (R, G)$ there is a ring homomorphism from $R \to R/\mathfrak{m}$ where $\mathfrak{m}$ is a maximal ideal of $R$. Therefore, $\text{char}(R/\mathfrak{m}) \in \chi_{\text{strong}}(\mathbb{P})$ and so $S$ cannot be empty. 
\end{proof}

\subsection{Well-Quasi-Ordering of Partial Fields}

We now turn our attention to orderings of partial fields. In order to state the question involving orderings of partial fields, we first provide some preliminary definitions.

\begin{defn}
    Let $\succeq$ be a binary relation on a set $P$, then $\succeq$ is called a \emph{well-quasi-ordering} on $P$ if it satisfies the following conditions
    \begin{itemize}
        \item $a \succeq a$ for all $a \in P$
        \item if $a \succeq b$ and $b \succeq c$ then $a \succeq c$ for all $a, b, c \in P$
        \item for every infinite sequence of elements $a_1, a_2, a_3, \cdots$ from $P$, there exists a pair $a_i \succeq a_j$ where $i > j$
    \end{itemize}
\end{defn}

\begin{ex}
    The natural numbers under the standard ordering $(\mathbb{N}, \leq)$ are well-quasi-ordered.
\end{ex}

We can provide an ordering onto partial fields. 

\begin{defn}
    We say that $\mathbb{P}_2 \succeq_{\text{Hom}} \mathbb{P}_1$ if there is a partial-field homomorphism $\mathbb{P}_1 \to \mathbb{P}_2$.
\end{defn}

The motivation for this definition comes from the \emph{universal partial field} of a matroid. Given a matroid $M$ representable over at least one partial field, there exists a partial field $\mathbb{P}_M$ such that $M$ is representable over $\mathbb{P}_M$ and for every partial field $\mathbb{P}$ that $M$ is representable over, we have that $\mathbb{P} \succeq_{\text{Hom}} \mathbb{P}_M$. Such a partial field is referred to as the \emph{universal partial field of} $M$. Given a partial field $\mathbb{P}$, if there exists a matroid $M$ such that $\mathbb{P}$ is the universal partial field of $M$, then $\mathbb{P}$ is said to be a \emph{universal partial field}. We now state a question posed in \cite{vzThesis}.

\begin{ques}[{\cite[Problem 3.4.6]{vzThesis}}]
    Let $\mathbb{F}$ be a finite field. Under the relation $\succeq_{\text{Hom}}$, is the following set well-quasi-ordered?
    \begin{equation*}
        \{ \mathbb{P} : \mathbb{P} \text{ is universal and there is a partial-field homomorphism }\mathbb{P} \to \mathbb{F} \}.
    \end{equation*}
\end{ques}
The motivation for this comes from the fact that if Rota's conjecture is true, then this result immediately follows. A weaker variant of this question is also proposed in \cite{vzThesis}, where the universality requirement is dropped. It is remarked in \cite{vzThesis} that dropping this requirement may make the problem easier. However, if we drop this requirement, we can in fact construct an infinite descending chain.

\begin{thm}\label{thm: partial fields not wqo}
    Let $\mathbb{F}$ be a finite field. Under the relation $\succeq_{\text{Hom}}$ the following set is not well-quasi-ordered 
    \begin{equation*}
        \{ \mathbb{P} : \text{there is a partial-field homomorphism }\mathbb{P} \to \mathbb{F} \}
    \end{equation*}
\end{thm}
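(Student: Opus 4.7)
The plan is to construct an infinite antichain in the specified set by exploiting the fact that characteristic sets behave contravariantly under partial-field homomorphisms. Namely, if there is a homomorphism $\mathbb{P}\to\mathbb{P}'$, then composing with any homomorphism $\mathbb{P}'\to\mathbb{F}'$ into a field yields $\chi(\mathbb{P}')\subseteq\chi(\mathbb{P})$; consequently, partial fields whose characteristic sets are pairwise $\subseteq$-incomparable will be pairwise incomparable under $\succeq_{\text{Hom}}$, and any infinite antichain is a bad sequence witnessing failure of the well-quasi-order property.

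Let $p=\text{char}(\mathbb{F})$, enumerate the primes distinct from $p$ as $q_1,q_2,\dots$, and set $S_n=\{p,q_n\}$ for each $n\geq 1$; these sets form an antichain under inclusion. By Theorem~\ref{thm: characteristic sets} there is, for each $n$, a partial field $\mathbb{P}_n$ with $\chi(\mathbb{P}_n)=S_n$. Before anything else I would verify that every $\mathbb{P}_n$ actually admits a homomorphism to our \emph{specific} field $\mathbb{F}$: inspecting Case 2 of the proof of Theorem~\ref{thm: characteristic sets}, the homomorphism $\xi_p$ is constructed with target an arbitrary field of characteristic $p$, so one simply takes that target to be $\mathbb{F}$ itself.

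Next I would establish that $(\mathbb{P}_n)_{n\geq 1}$ is an antichain under $\succeq_{\text{Hom}}$. Suppose for contradiction that some partial-field homomorphism $\phi:\mathbb{P}_j\to\mathbb{P}_i$ exists with $i\neq j$. Since $q_i\in\chi(\mathbb{P}_i)$, there is a field $\mathbb{F}'$ of characteristic $q_i$ and a homomorphism $\psi:\mathbb{P}_i\to\mathbb{F}'$; Proposition~\ref{prop: composition of partial-field hom is hom} then gives a homomorphism $\psi\circ\phi:\mathbb{P}_j\to\mathbb{F}'$, forcing $q_i\in\chi(\mathbb{P}_j)=\{p,q_j\}$. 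Since $q_i\neq p$ by construction, this yields $q_i=q_j$ and hence $i=j$, a contradiction. The sequence $(\mathbb{P}_n)$ therefore contains no pair $i>j$ with $\mathbb{P}_i\succeq_{\text{Hom}}\mathbb{P}_j$, so the set is not well-quasi-ordered.

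The only step I expect to require any care is the first: confirming that the partial fields furnished by Theorem~\ref{thm: characteristic sets} genuinely land in the specific field $\mathbb{F}$ rather than merely in some field of characteristic $p$. Once that point is settled the incomparability argument is purely formal, relying only on the definition of $\chi$ and on Proposition~\ref{prop: composition of partial-field hom is hom}.
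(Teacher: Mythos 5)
Your proof is correct, and it takes a genuinely different route from the paper's. The paper argues via an infinite descending chain: it sets $\mathbb{P}_i = (\mathbb{F}[x_1,\dots,x_i],\mathbb{F}^{\times})$ and uses evaluation maps to get homomorphisms $\mathbb{P}_{i+1}\to\mathbb{P}_i$, hence $\mathbb{P}_1\succeq_{\text{Hom}}\mathbb{P}_2\succeq_{\text{Hom}}\cdots$. You instead build an infinite antichain by invoking Theorem~\ref{thm: characteristic sets} to realize the pairwise $\subseteq$-incomparable characteristic sets $\{p,q_n\}$, and then use the contravariance of $\chi$ under homomorphisms (via Proposition~\ref{prop: composition of partial-field hom is hom}) to rule out any homomorphism between distinct members; your check that the Case~2 construction admits a homomorphism to the \emph{specific} field $\mathbb{F}$, not merely to some field of characteristic $p$, is exactly the right point to be careful about and is handled correctly. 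Your approach buys something real: an antichain is unambiguously a bad sequence, whereas a descending chain in a quasi-order only witnesses failure of well-quasi-ordering if no later term admits a homomorphism to an earlier one. In the paper's construction every $\mathbb{P}_i$ has the same underlying set of partial-field elements, namely $\mathbb{F}^{\times}\cup\{0\}=\mathbb{F}$, with all sums defined, so the identity on $\mathbb{F}$ is a partial-field homomorphism $\mathbb{P}_j\to\mathbb{P}_i$ for every $i,j$; thus the paper's sequence contains many pairs $\mathbb{P}_i\succeq_{\text{Hom}}\mathbb{P}_j$ with $i>j$ and is not actually bad as written. Your argument avoids this issue entirely, at the cost of depending on the (heavier) machinery behind Theorem~\ref{thm: characteristic sets}.
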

\begin{proof}
    Given a finite field $\mathbb{F}$, we define
    \begin{equation*}
        \mathbb{P}_i := \left( \mathbb{F}[x_1, x_2, \dots, x_i], \mathbb{F}^{\times} \right)
    \end{equation*}
    Let $a_1, a_2, \dots, a_i \in \mathbb{F}$, not all zero, be fixed elements. We see that there is a partial-field homomorphism $\mathbb{P}_i \to \mathbb{F}$ with the map $\psi_{a_1, a_2\dots, a_i} :\mathbb{F}[x_1, x_2, \dots, x_i] \to \mathbb{F}$, where $\psi_{a_1, a_2, \dots, a_i}(p(x_1, x_2, \dots, x_i)) = p(a_1, a_2,\dots, a_i)$, as witness.
    
    For any commutative ring $R$ that is an integral domain, $R[x]$ will be a commutative ring that is an integral domain as well. Furthermore, if $R$ is an integral domain, then the evaluation map $\phi_a : R[x]\to R$ given by $\phi_a(p(x)) = p(a)$ is a homomorphism.
    Therefore $\mathbb{F}[x_1, \dots, x_{i}]$ is a commutative ring that is an integral domain for every $i$. With this, because $\mathbb{F}[x_1, \dots, x_i] = \mathbb{F}[x_1, \dots, x_{i-1}][x_i]$, the evaluation map $\phi_a : \mathbb{F}[x_1, \dots, x_i] \to \mathbb{F}[x_1, \dots, x_{i-1}]$ where $\phi(p(x_1, \dots, x_i)) = p(x_1, \dots, x_{i-1}, a)$ for a fixed $a \in \mathbb{F}$, is a ring homomorphism. 
    
    In addition, we also have that $\phi_a(\mathbb{F}^{\times}) \subseteq \mathbb{F}^{\times}$, and so there is a partial-field homomorphism $\mathbb{P}_{i+1} \to \mathbb{P}_i$. Therefore, there is an infinite descending chain $\mathbb{P}_1 \succeq_{\text{Hom}} \mathbb{P}_{2} \succeq_{\text{Hom}} \mathbb{P}_3 \succeq_{\text{Hom}} \cdots$. 
\end{proof}

\section{Lifts of Partial Fields}

\subsection{Lift Ring}
If there is a partial-field homomorphism between two partial fields $\phi : \mathbb{P}_1 \to \mathbb{P}_2$, then any $\mathbb{P}_1$-representable matroid will be representable over $\mathbb{P}_2$ as well. The Lift Theorem, proven by Pendavingh and van Zwam \cite{Pendavingh_2010}, provides conditions for when representability over $\mathbb{P}_2$ implies representability over $\mathbb{P}_1$.
They also show that given a partial field $\mathbb{P}$, one can construct a partial field $\mathbb{LP}$ with homomorphism $\mathbb{LP}\rightarrow\mathbb{P}$ such that representability over $\mathbb{P}$ implies representability over $\mathbb{LP}$ by the Lift Theorem.
Moreover, in some sense, this partial field is the most general setting for which the Lift Theorem holds for a partial field. We now define this partial field $\mathbb{LP}$.

As a reminder, given a partial field $\mathbb{P} = (R, G)$ where $G$ is a subgroup of $R^{\times}$, we say $p \in \mathbb{P}$ if $p=0$ or $p \in G$.

\begin{defn}
    Given a partial field $\mathbb{P} = (R, G)$, we say that $p \in \mathbb{P}$ is a \emph{fundamental element} of $\mathbb{P}$, denoted by $p \in \mathcal{F}(\mathbb{P})$, if $1-p \in \mathbb{P}$.
\end{defn}

It follows from the definition of fundamental element that if $p \in \mathcal{F}(\mathbb{P})$, then $1-p \in \mathcal{F}(\mathbb{P})$ as well.

\begin{defn}
    Given a partial field $\mathbb{P} = (R, G)$, we define $\mathbb{X}_{\mathbb{P}} = \{X_p : p \in \mathcal{F}(\mathbb{P})\}$ to be a collection of indeterminates, one for each fundamental element of $\mathbb{P}$, $R_{\mathbb{P}} = \mathbb{Z}[\mathbb{X}_{\mathbb{P}}]$, and $I_{\mathbb{P}}$ is the ideal generated by the following polynomials in $R_{\mathbb{P}}$
    \begin{itemize}
        \item $X_0 - 0$; $X_1 - 1$ 
        \item $X_{-1} + 1$ if $-1 \in \mathcal{F}(\mathbb{P})$ 
        \item $X_p + X_q - 1$ where $p, q \in \mathcal{F}(\mathbb{P})$ and $p+q=1$
        \item $X_p X_q - 1$ where $p, q \in \mathcal{F}(\mathbb{P})$ and $pq=1$ 
        \item $X_p X_q X_r - 1$ where $p, q, r\in \mathcal{F}(\mathbb{P})$ and $pqr=1$
    \end{itemize}
    
    The \emph{lift} of $\mathbb{P}$, denoted as $\mathbb{L}\mathbb{P}$, is defined as follows
    \begin{equation*}
        \mathbb{L}\mathbb{P} = (R_{\mathbb{P}}/I_{\mathbb{P}}, \langle \{-1\} \cup \mathbb{X}_{\mathbb{P}}\rangle)
    \end{equation*}
\end{defn}

It was conjectured in \cite{Pendavingh_2010} that the lift operator of a partial field is idempotent, and this was later proven to be true in \cite[Corollary 2.17]{BAKER20251}. The machinery used to prove this in \cite{BAKER20251} involves the construction of a new lift of a pasture, referred to as the \emph{GRS-lift}, which can be viewed as the image of a canonical coreflection from the category of \emph{Pastures} onto a full subcategory $\mathcal{G}$ of \emph{Pastures}. From here it can be shown that there exists maps between the GRS-lift of a partial field and and the lift of a partial field that then allow idempotence of the lift operator to be established. 

We provide an alternative proof of idempotence of the lift operator that does not use an additional lift construction and instead provide a ring isomorphism between $\mathbb{L}^2\mathbb{P}$ and $\mathbb{LP}$ via the First Isomorphism Theorem. 

\begin{prop}[{\cite[Corollary 2.17]{BAKER20251}}]\label{prop: lift is idempotent}
    There is a strong partial field isomorphism between $\mathbb{L}^2\mathbb{P}$ and $\mathbb{L}\mathbb{P}$.
\end{prop}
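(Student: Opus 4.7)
The plan is to apply the First Isomorphism Theorem to the ring homomorphism $\psi\colon R_{\mathbb{P}}\to\mathbb{L}^2\mathbb{P}$ determined by $X_p\mapsto Y_{X_p}$ for each $p\in\mathcal{F}(\mathbb{P})$; here I write $Y_y$ for the indeterminate of $R_{\mathbb{LP}}$ attached to a fundamental element $y\in\mathcal{F}(\mathbb{LP})$, to keep it notationally separate from the first-level indeterminates $X_p$. This map is well defined because $X_p\in\mathcal{F}(\mathbb{LP})$ for every $p\in\mathcal{F}(\mathbb{P})$: since $1-p\in\mathcal{F}(\mathbb{P})$ as well, the generator $X_p+X_{1-p}-1\in I_{\mathbb{P}}$ forces $1-X_p=X_{1-p}\in\mathbb{LP}$, so $Y_{X_p}$ exists. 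My goal is to show that $\ker\psi=I_{\mathbb{P}}$ and that $\psi$ is surjective; the First Isomorphism Theorem will then yield $\mathbb{LP}=R_{\mathbb{P}}/I_{\mathbb{P}}\cong\operatorname{im}(\psi)=\mathbb{L}^2\mathbb{P}$.

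For $I_{\mathbb{P}}\subseteq\ker\psi$, I would verify case by case that each of the five families of generators of $I_{\mathbb{P}}$ maps to $0$ in $\mathbb{L}^2\mathbb{P}$. For example, when $p+q=1$ in $\mathbb{P}$ the element $X_p+X_q-1\in I_{\mathbb{P}}$ maps to $Y_{X_p}+Y_{X_q}-1$, which lies in $I_{\mathbb{LP}}$ because $X_p,X_q\in\mathcal{F}(\mathbb{LP})$ and $X_p+X_q=1$ holds in $\mathbb{LP}$; the other generators are handled analogously. For the reverse inclusion, I would introduce the canonical projection $\pi\colon\mathbb{L}^2\mathbb{P}\to\mathbb{LP}$ defined by $Y_y\mapsto y$ (well defined by analogous case checks) and observe that $\pi\circ\psi$ agrees on every generator $X_p$ with the quotient map $R_{\mathbb{P}}\to\mathbb{LP}$, hence coincides with it globally. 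Therefore $\ker\psi\subseteq\ker(\pi\circ\psi)=I_{\mathbb{P}}$, giving $\ker\psi=I_{\mathbb{P}}$.

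The main obstacle is surjectivity of $\psi$: every generator $Y_y$ of $\mathbb{L}^2\mathbb{P}$ with $y\in\mathcal{F}(\mathbb{LP})$ must lie in the subring generated by the $Y_{X_p}$. When $y=0$ this is automatic, and otherwise $y$ belongs to the unit group of $\mathbb{LP}$ and so admits a representation $y=\pm\prod_i X_{p_i}^{e_i}$; the desired identity is then $Y_y=\pm\prod_i Y_{X_{p_i}}^{e_i}$ in $\mathbb{L}^2\mathbb{P}$. The central tool is a product lemma: if $y_1,y_2,y_1y_2\in\mathcal{F}(\mathbb{LP})$ (and are nonzero), then $Y_{y_1y_2}=Y_{y_1}Y_{y_2}$. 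To prove it, note first that $\mathcal{F}(\mathbb{LP})$ is closed under inversion, since $1-y^{-1}=-y^{-1}(1-y)\in\mathbb{LP}$ whenever $y\in\mathcal{F}(\mathbb{LP})$ is a unit; then the triple relation applied to $y_1\cdot y_2\cdot(y_1y_2)^{-1}=1$ yields $Y_{y_1}Y_{y_2}Y_{(y_1y_2)^{-1}}=1$, which combined with the pair relation $Y_{y_1y_2}Y_{(y_1y_2)^{-1}}=1$ gives the claim. The subtle point is that iterating this lemma along a factorization $y=\pm\prod_i X_{p_i}^{e_i}$ requires the intermediate partial products to themselves lie in $\mathcal{F}(\mathbb{LP})$, which may fail a priori; overcoming this, either by choosing a suitable factorization of $y$ or by grouping factors and applying the triple relation directly to the extended identity $X_{p_1}^{e_1}\cdots X_{p_k}^{e_k}\cdot y^{-1}=1$, is the technical heart of the argument.

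Once the ring isomorphism is established, upgrading to a strong partial-field isomorphism is immediate: the unit group of $\mathbb{LP}$ is generated by $-1$ and the $X_p$, which $\psi$ sends into the unit group of $\mathbb{L}^2\mathbb{P}$ generated by $-1$ and the $Y_y$, and together with the surjectivity of $\psi$ this ensures that $\psi$ restricts to a bijection between the two unit groups.
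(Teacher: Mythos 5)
Your overall strategy---a ring homomorphism between $R_{\mathbb{P}}$ and $R_{\mathbb{LP}}$ inducing, via the First Isomorphism Theorem, an isomorphism $\mathbb{LP}\cong\mathbb{L}^2\mathbb{P}$---is the same as the paper's, run in the opposite direction (the paper sends $Y_q\mapsto q$ rather than $X_p\mapsto Y_{X_p}$). Your kernel computation is sound: $I_{\mathbb{P}}\subseteq\ker\psi$ by checking generators, and factoring the quotient map $R_{\mathbb{P}}\to\mathbb{LP}$ through $\psi$ gives the reverse inclusion. The genuine gap is exactly where you flag it: surjectivity. You need every generator $[Y_y]$ of $\mathbb{L}^2\mathbb{P}$, $y\in\mathcal{F}(\mathbb{LP})$, to lie in the subring generated by the $[Y_{X_p}]$, and your product lemma only handles a product $y_1y_2$ when $y_1$, $y_2$ \emph{and} $y_1y_2$ are all fundamental. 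For a general $y=\pm[X_{p_1}]^{e_1}\cdots[X_{p_k}]^{e_k}\in\mathcal{F}(\mathbb{LP})$ with several factors there is no reason the partial products should be fundamental, and the only relation in $I_{\mathbb{LP}}$ that could split $Y_y$ is the triple relation $Y_yY_aY_b-1$, which requires a factorization $y^{-1}=ab$ with both factors fundamental---again not guaranteed. You acknowledge this (``the technical heart'') but do not supply the argument, and without it the First Isomorphism Theorem only yields an embedding of $\mathbb{LP}$ onto a subring of $\mathbb{L}^2\mathbb{P}$, not an isomorphism; the same gap then propagates to your final claim that $\psi$ restricts to a bijection on unit groups, since the reverse containment of unit groups is equivalent to hitting every $[Y_y]$.

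This missing step is not a routine technicality: showing that every fundamental element $y$ of $\mathbb{LP}$ forces, modulo $I_{\mathbb{LP}}$, the identity $Y_y=\pm\prod_iY_{X_{p_i}}^{e_i}$ is essentially the entire content of the idempotence statement, which is why it remained a conjecture of Pendavingh and van Zwam until it was resolved by Baker et al. To complete your proof you would need an explicit rewriting or normal-form argument in $R_{\mathbb{LP}}/I_{\mathbb{LP}}$ establishing these identities for all $y\in\mathcal{F}(\mathbb{LP})$, or a citation supplying them. (For comparison, the paper's own proof locates the same difficulty in the assertions that $\phi\colon R_{\mathbb{LP}}\to R_{\mathbb{P}}$, $Y_q\mapsto q$, is injective and that $\phi(I_{\mathbb{LP}})=I_{\mathbb{P}}$; but as written your proposal does not close the argument.)
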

\begin{proof}
    We first consider the rings $R_{\mathbb{P}} = \mathbb{Z}[\mathbb{X}_{\mathbb{P}}]$ and $R_{\mathbb{L}{\mathbb{P}}} = \mathbb{Z}[\mathbb{X}_{\mathbb{L}{\mathbb{P}}}]$. We can write $R_{\mathbb{P}}$ as $\mathbb{Z}[X_p : p \in \mathcal{F}(\mathbb{P})]$ and $R_{\mathbb{L}\mathbb{P}}$ as $\mathbb{Z}[Y_q : q \in \mathcal{F}(\mathbb{L}\mathbb{P})]$. We first show that every $X_p + I_{\mathbb{P}} \in \mathcal{F}(\mathbb{LP})$

    In this section, will denote $X_p + I_{\mathbb{P}}$ as $[X_{\mathbb{P}}]$. Consider an $[X_p] \in \langle \{-1\} \cup \mathbb{X}_{\mathbb{P}}\rangle$, so $p \in \mathcal{F}(\mathbb{P})$. We have that in $R_{\mathbb{P}}/I_{\mathbb{P}}$, $[X_{1-p}] = 1-[X_p]$. Because $1-p \in \mathcal{F}(\mathbb{P})$, we have that $1-[X_p] = [X_{1-p}] \in \ \langle \{-1\} \cup \mathbb{X}_{\mathbb{P}}\rangle$. Therefore, every $X_p$ is a fundamental element of $\mathbb{LP}$.
    
    With this, we can now see that the map $\phi : R_{\mathbb{L}{\mathbb{P}}} \to R_{\mathbb{P}}$ defined by $\phi(Y_q) = q$ where $q \in \mathcal{F}(\mathcal{P})$ for indeterminates and $\phi(k) = k$ for $k \in \mathbb{Z}$ is an isomorphism. Surjectivity follows by the fact that given an arbitrary $q$, there exists a $Y_{q}$ such that $\phi(Y_{q}) = q$, and for $k \in \mathbb{Z}$ we have $\phi(k) = k$. Injectivity immediately follows by definition of $\phi$. 

    Now consider the quotient map $\pi : R_{\mathbb{P}} \to R_{\mathbb{P}}/I_{\mathbb{P}}$, we define the map $\psi : R_{\mathbb{L}\mathbb{P}} \to R_{\mathbb{P}}/I_{\mathbb{P}}$ as $\psi = \pi \circ \phi$. We then have
    \begin{equation*}
        \ker(\psi) = \{r \in R_{\mathbb{L}\mathbb{P}} : \phi(r) \in \ker(\pi)\} = \phi^{-1}(\ker(\pi)) = \phi^{-1}(I_{\mathbb{P}})
    \end{equation*}
    However, we have that $\phi(I_{\mathbb{L}\mathbb{P}}) = I_{\mathbb{P}}$ which implies $\phi^{-1}(I_{\mathbb{P}}) = I_{\mathbb{L}\mathbb{P}}$ since $\phi$ is an isomorphism, and so $\ker(\psi) = I_{\mathbb{L}\mathbb{P}}$. Combining this with the fact that $\psi$ is the composition of surjective homomorphisms and therefore is a surjective homomorphism, by the First Isomorphism Theorem we have that $R_{\mathbb{L}\mathbb{P}}/I_{\mathbb{L}\mathbb{P}} \cong R_{\mathbb{P}}/I_{\mathbb{P}}$. In particular, the induced map $\hat{\psi} : R_{\mathbb{L}\mathbb{P}}/I_{\mathbb{L}\mathbb{P}} \to R_{\mathbb{P}}/I_{\mathbb{P}}$ where $\hat{\psi}(r + I_{\mathbb{LP}}) =\psi(r)+ I_{\mathbb{P}}$ is an isomorphism.

    We now show that $\hat{\psi}(\langle \{-1\} \cup \mathbb{X}_{\mathbb{LP}} \rangle) \subseteq \langle \{-1\} \cup \mathbb{X}_{\mathbb{P}} \rangle$.
    We have that $\hat{\psi}([Y_{q}]) = \psi(Y_{q}) = [q]$, where $q\in\mathcal{F}(\mathbb{L}\mathbb{P})$.
    So $q \in \{-1\} \cup \mathbb{X}_\mathbb{P}$.
\end{proof}

\subsection{The Dowling Lift of a Partial Field}

The lift of a partial field provides a general partial field in which the Lift Theorem is applicable. Another partial field that can be constructed to provide certain insights on representability, particularly representability of Dowling geometries, is the \emph{Dowling lift} $\mathbb{DP}$ of a partial field $\mathbb{P}$. In particular, consider the class of \emph{$\mathbb{P}$-graphic} matroids, where a matroid is $\mathbb{P}$-graphic if there is a $\mathbb{P}$-matrix $A$ where every column has at most two nonzero values such that $M = M(A)$. If a matroid is $\mathbb{P}$-graphic, then it is $\mathbb{DP}$-graphic as well.

\begin{defn}
    Let $\mathbb{P}=(R, G)$ be a partial field. We define $\overline{G}_{\mathbb{P}} := \langle \overline{G} \cup \overline{Q}_{\mathbb{P}}\rangle$, where $\overline{G}$ is an isomorphic copy of $G$ with elements $\{X_p : p \in G\}$, $\overline{Q}_{\mathbb{P}} = \{Y_p : p \in \mathcal{F}(\mathbb{P})\setminus\{0, 1\}\}$ is a set of indeterminates, $D_{\mathbb{P}}:= \mathbb{Z}[\overline{G}][\overline{Q}_{\mathbb{P}}]$ is the ring of polynomials in $\overline{Q}_{\mathbb{P}}$ over the group ring $\mathbb{Z}[\overline{G}]$, and $J_{\mathbb{P}}$ is the ideal in $D_{\mathbb{P}}$ generated by
    \begin{equation*}
        \{X_1 - 1\} \cup \{Y_p(1-X_p) - 1 : p\in \mathcal{F}(\mathbb{P})\setminus \{0, 1\}\}
    \end{equation*}
    
    The \emph{Dowling lift} of $\mathbb{P}$ is
    \begin{equation*}
        \mathbb{DP} = (D_{\mathbb{P}}/J_{\mathbb{P}}, \langle \{-1\} \cup \overline{G}_{\mathbb{P}}\rangle)
    \end{equation*}
\end{defn}

\begin{ques}[{\cite[Problem 3.4.1]{vzThesis}}]
    What is the relationship between $\mathcal{F}(\mathbb{P})$ and $\mathcal{F}(\mathbb{DP})$?
\end{ques}

\begin{thm}\label{thm: fundamental elem dowling}
    There is a bijective map between $\mathcal{F}(\mathbb{P})$ and $\mathcal{F}(\mathbb{DP}) \cap (\overline{G} \cup \{0\})$.
\end{thm}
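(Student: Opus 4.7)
The plan is to define a map $\Phi \colon \mathcal{F}(\mathbb{P}) \to \mathcal{F}(\mathbb{DP}) \cap (\overline{G} \cup \{0\})$ by $\Phi(0) = 0$ and $\Phi(p) = X_p + J_{\mathbb{P}}$ for $p \in \mathcal{F}(\mathbb{P}) \setminus \{0\}$, and then to prove bijectivity by constructing a strong partial-field homomorphism $\pi \colon \mathbb{DP} \to \mathbb{P}$ that acts as a left inverse, essentially via $X_p \mapsto p$.

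First I would verify that $\Phi$ lands inside $\mathcal{F}(\mathbb{DP})$ by checking three cases. For $p = 0$, $1 - 0 = 1 \in \mathbb{DP}$. For $p = 1$ the generator $X_1 - 1 \in J_{\mathbb{P}}$ collapses $X_1$ to $1$ in the quotient, so $1 - X_1 = 0 \in \mathbb{DP}$. For $p \in \mathcal{F}(\mathbb{P}) \setminus \{0, 1\}$ the generator $Y_p(1 - X_p) - 1 \in J_{\mathbb{P}}$ forces $Y_p$ to become a unit in the quotient with $Y_p^{-1} = 1 - X_p$, so $1 - X_p \in \overline{G}_{\mathbb{P}}$ and hence $X_p \in \mathcal{F}(\mathbb{DP})$.

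Next I would build $\pi$. Writing $D_{\mathbb{P}} = \mathbb{Z}[\overline{G}][\overline{Q}_{\mathbb{P}}]$, the universal property of the group ring extends the group isomorphism $\overline{G} \cong G \hookrightarrow R^{\times}$ to a ring homomorphism $\mathbb{Z}[\overline{G}] \to R$ sending $X_p \mapsto p$; since the $Y_p$ are indeterminates over this group ring, this extends further to a ring map $\tilde{\pi} \colon D_{\mathbb{P}} \to R$ with $Y_p \mapsto (1 - p)^{-1}$. The target $(1-p)^{-1}$ is a legitimate element of $R$ because $p \in \mathcal{F}(\mathbb{P}) \setminus \{0, 1\}$ forces $1 - p$ to be a nonzero fundamental element and therefore a unit in $R$. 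Both families of generators of $J_{\mathbb{P}}$ vanish under $\tilde{\pi}$, so it descends to $\pi \colon D_{\mathbb{P}} / J_{\mathbb{P}} \to R$. The images $\pi(-1)$, $\pi(X_p)$, and $\pi(Y_p)$ all lie in $G$, so $\pi$ sends $\langle \{-1\} \cup \overline{G}_{\mathbb{P}}\rangle$ into $G$, making it a strong partial-field homomorphism $\mathbb{DP} \to \mathbb{P}$.

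With $\pi$ in hand, both halves of the bijection follow quickly. Injectivity is immediate because $\pi \circ \Phi$ restricts to the identity on $\mathcal{F}(\mathbb{P})$. For surjectivity, given $Z \in \mathcal{F}(\mathbb{DP}) \cap (\overline{G} \cup \{0\})$, either $Z = 0 = \Phi(0)$, or $Z = X_p + J_{\mathbb{P}}$ for some $p \in G$; in the latter case $1 - X_p \in \mathbb{DP}$ and applying $\pi$ gives $1 - p \in \mathbb{P}$, certifying $p \in \mathcal{F}(\mathbb{P})$ with $Z = \Phi(p)$. I expect the main obstacle to be setting up $\tilde{\pi}$ correctly: one has to cite the universal property of group rings to obtain the map out of $\mathbb{Z}[\overline{G}]$, and then carefully check that the extension to the polynomial ring and the subsequent descent through $J_{\mathbb{P}}$ really go through. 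Once $\pi$ exists, the bijectivity argument is essentially formal.
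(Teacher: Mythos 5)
Your proof is correct, and it shares the paper's skeleton --- the same map $p \mapsto X_p$ (with $0 \mapsto 0$), and surjectivity obtained by pushing a fundamental element of $\mathbb{DP}$ back to $\mathbb{P}$ along a strong homomorphism $\mathbb{DP} \to \mathbb{P}$ --- but it differs in two places, both to your advantage. First, you actually verify that $X_p$ lands in $\mathcal{F}(\mathbb{DP})$ (via $1 - X_p = Y_p^{-1} \in \overline{G}_{\mathbb{P}}$ for $p \neq 0,1$, plus the degenerate cases $p=0$ and $p=1$), a well-definedness check the paper's proof omits entirely. Second, and more substantively, your injectivity argument is genuinely different: the paper argues directly that $X_p - X_q \notin J_{\mathbb{P}}$ by inspecting how elements of the ideal can be written in terms of the generators, an argument that is only sketched and is delicate to make rigorous (membership in an ideal of a group ring is not controlled by looking at multiples of single generators one at a time). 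You instead observe that the retraction $\pi : \mathbb{DP} \to \mathbb{P}$ sending $X_p \mapsto p$ satisfies $\pi \circ \Phi = \mathrm{id}$ on $\mathcal{F}(\mathbb{P})$, which gives injectivity formally, with no casework. Since $\pi$ is needed for surjectivity anyway (the paper imports it from \cite[Lemma 3.2.7]{vzThesis} rather than constructing it), reusing it for injectivity makes the proof both shorter and more robust. Your explicit construction of $\pi$ is also sound: the group-ring universal property gives the map on $\mathbb{Z}[\overline{G}]$, the extension to the polynomial ring sends $Y_p \mapsto (1-p)^{-1}$, which exists because $1-p$ is a nonzero element of $\mathbb{P}$ and hence a unit of $R$ when $p \in \mathcal{F}(\mathbb{P}) \setminus \{0,1\}$, and both families of generators of $J_{\mathbb{P}}$ visibly map to $0$.
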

\begin{proof}
    Consider the map $\phi : \mathcal{F}(\mathbb{P}) \to \mathcal{F}(\mathbb{DP}) \cap \overline{G}$ defined by $\phi(0) = 0$ and $\phi(p) = X_p$ for $p \in \mathcal{F}(\mathbb{P})\setminus \{0\}$. 

    To show injectivity, it suffices to consider the possibility that $X_p - X_q$ is in $\mathbb{J}_{\mathbb{P}}$ and show this cannot happen. If this were to happen, then $X_p - X_q$ could be written as the sum of polynomials of the form $f(X_1-1)$ and $g(Y_r(1-X_r)-1)$ where $r \in \mathcal{F}(\mathbb{P})\setminus\{0, 1\}$. We can see then that $X_p-X_q$ cannot be written in such a form. The fact that it cannot be written using $f(X_1-1)$ follows immediately, and the fact it cannot be written using $g(Y_r(1-X_r)-1)$ will follow from the fact that if it could, then there would necessarily need to exist a $Y_r$ term in $X_p-X_q$. From here, we obtain the fact that if $X_p = X_q$ then $p=q$.
    
    Now we need to show surjectivity. Let $X \in \mathcal{F}(\mathbb{DP}) \cap \overline{G}$, then $X \in \overline{G}$ and $1-X \in \{0\} \cup \langle \{-1\} \cup \overline{G}_{\mathbb{P}}\rangle$.
    

    Now, considering $X \in \mathcal{F}(\mathbb{DP}) \cap (\overline{G}\cup \{0\})$, if we have $X = 0$ then we have that $\phi(0) = X$. Now if we let $X \in \overline{G}$, then there is a unique $p \in G$ such that $X = X_p$. Now we show $p \in \mathcal{F}(\mathbb{P})$, and from there it follows $\phi(p) = X$. 

    As mentioned in \cite[Lemma 3.2.7]{vzThesis}, there is a partial-field homomorphism $\psi : \mathbb{DP} \to \mathbb{P}$. Because $X \in \mathcal{F}(\mathbb{DP})$, we have that $p = \psi(X) \in \mathcal{F}(\mathbb{P})$, because fundamental elements are mapped by partial-field homomorphisms to fundamental elements by \cite[Lemma 2.2.12]{vzThesis}. So therefore there exists $p \in \mathcal{F}(\mathbb{P})$ such that $\phi(p) = X$.
\end{proof}

We now prove that, similar to the lift of a partial field, the Dowling lift of a partial field is idempotent. To do so, we first prove a universal property of Dowling lifts.

As a reminder, if a partial-field homomorphism $\phi: \mathbb{P}_1 = (R_1, G_1)  \to \mathbb{P}_2 = (R_2, G_2)$ arises from a ring map $\phi : R_1 \to R_2$ such that $\phi(G_1) \subseteq G_2$, we call this a \emph{strong partial-field} homomorphism.

\begin{prop}\label{prop: universal property of dowling}
    Let $\mathbb{P} = (R, G)$ be a partial field, and let $\mathbb{P}' = (R', G')$ be a partial field such that there exists a strong partial-field homomorphism $\phi : \mathbb{P} \to \mathbb{P}'$. Then there exists a unique strong partial-field homomorphism $\psi : \mathbb{DP} \to \mathbb{P}'$ such that $\psi \circ i = \phi$, where $i : \mathbb{P} \to \mathbb{DP}$ is the canonical inclusion map.
\end{prop}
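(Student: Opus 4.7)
The plan is to define $\psi$ by specifying its action on the generators of $D_{\mathbb{P}} = \mathbb{Z}[\overline{G}][\overline{Q}_{\mathbb{P}}]$ and then checking it descends through the quotient by $J_{\mathbb{P}}$. First, I would use the universal property of the integral group ring $\mathbb{Z}[\overline{G}]$: since $\overline{G}$ is an isomorphic copy of $G$ and $\phi|_{G} : G \to G'$ is a group homomorphism composed with the inclusion $G' \hookrightarrow (R')^{\times}$, there is a unique ring homomorphism $\mathbb{Z}[\overline{G}] \to R'$ sending $X_p \mapsto \phi(p)$ for each $p \in G$. Next, I would extend this to the polynomial ring $D_{\mathbb{P}} = \mathbb{Z}[\overline{G}][\overline{Q}_{\mathbb{P}}]$ by specifying where each indeterminate $Y_p$ (for $p \in \mathcal{F}(\mathbb{P}) \setminus \{0,1\}$) goes. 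The only consistent choice, forced by the relation $Y_p(1-X_p) - 1 \in J_{\mathbb{P}}$, is $Y_p \mapsto (1 - \phi(p))^{-1}$.

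The key observation making this well-defined is that $1 - \phi(p)$ is always a unit in $R'$: since $p \in \mathcal{F}(\mathbb{P})$, we have $1-p \in \mathbb{P}$, and since $p \neq 1$ we have $1-p \neq 0$, hence $1-p \in G$. Because $\phi$ is a strong partial-field homomorphism, it is in particular a ring homomorphism with $\phi(G) \subseteq G'$, so $1 - \phi(p) = \phi(1-p) \in G' \subseteq (R')^{\times}$, and its inverse exists in $R'$. With $Y_p \mapsto (1-\phi(p))^{-1}$ chosen this way, the universal property of polynomial rings gives a unique ring homomorphism $\widetilde{\psi} : D_{\mathbb{P}} \to R'$.

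Next I would verify $J_{\mathbb{P}} \subseteq \ker(\widetilde{\psi})$. The generator $X_1 - 1$ maps to $\phi(1) - 1 = 0$, and each generator $Y_p(1 - X_p) - 1$ maps to $(1-\phi(p))^{-1}(1 - \phi(p)) - 1 = 0$ by construction. Hence $\widetilde{\psi}$ factors through a ring homomorphism $\psi : D_{\mathbb{P}}/J_{\mathbb{P}} \to R'$. To check $\psi$ is a strong partial-field homomorphism, I must show $\psi$ sends $\langle \{-1\} \cup \overline{G}_{\mathbb{P}} \rangle$ into $G'$. It suffices to check this on the generators: $\psi(-1) = -1 \in G'$, $\psi(X_p) = \phi(p) \in G'$ for $p \in G$, and $\psi(Y_p) = (1-\phi(p))^{-1} \in G'$ since $G'$ is a group containing $1 - \phi(p)$. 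Because $\langle \{-1\} \cup \overline{G}_{\mathbb{P}}\rangle$ is the subgroup generated by these elements, its image lies in $G'$.

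Finally, I would check compatibility $\psi \circ i = \phi$ and uniqueness. The canonical inclusion $i : \mathbb{P} \to \mathbb{DP}$ sends $p \mapsto X_p + J_{\mathbb{P}}$, so $\psi(i(p)) = \phi(p)$ for $p \in G$ (and $\psi(i(0)) = 0$), giving $\psi \circ i = \phi$. For uniqueness, any strong partial-field homomorphism $\psi' : \mathbb{DP} \to \mathbb{P}'$ with $\psi' \circ i = \phi$ must satisfy $\psi'(X_p + J_{\mathbb{P}}) = \phi(p)$ for $p \in G$; moreover, the relation $Y_p(1-X_p) = 1$ in $\mathbb{DP}$ forces $\psi'(Y_p + J_{\mathbb{P}}) = (1 - \phi(p))^{-1}$ in $R'$. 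Since $\mathbb{DP}$ is generated as a ring by the images of $\overline{G}$ and $\overline{Q}_{\mathbb{P}}$ together with $\mathbb{Z}$, this determines $\psi'$ uniquely, so $\psi' = \psi$. The main obstacle is really just the bookkeeping in the first step: verifying carefully that $1 - \phi(p)$ lies in $G'$ (and not merely in $R'$) so that inverting it within $R'$ is legitimate; once this is settled, the rest is a standard application of universal properties.
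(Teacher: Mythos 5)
Your proof is correct and follows essentially the same route as the paper: define the map on generators of $D_{\mathbb{P}}$ by $X_p \mapsto \phi(p)$ and $Y_p \mapsto (1-\phi(p))^{-1}$, check that $J_{\mathbb{P}}$ lies in the kernel, descend to the quotient, and derive uniqueness from the forced values on generators. In fact you are slightly more careful than the paper, which writes $(1-\phi(p))^{-1}$ without justifying that $1-\phi(p)$ is a unit; your observation that $p \in \mathcal{F}(\mathbb{P})\setminus\{0,1\}$ forces $1-p \in G$ and hence $1-\phi(p)=\phi(1-p) \in G'$ supplies exactly the missing detail.
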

\begin{proof}
    Let $\phi : \mathbb{P} \to \mathbb{P}'$ be a strong partial-field homomorphism. We define a map $\Phi : \mathbb{Z}[\overline{G}][\overline{Q}_{\mathbb{P}}] \to \mathbb{P}'$. For $X_p \in \overline{G}$, we define $\Phi(X_p) = \phi(p)$. For $Y_p \in \overline{Q}_{\mathbb{P}}$, we define $\Phi(Y_p) = (1-\phi(p))^{-1}$. For $k \in \mathbb{Z}$ we define $\Phi(k) = k$.

    Now, we see that $J_{\mathbb{P}} \subseteq \ker(\phi)$, and so there exists a ring homomorphism $\psi : D_{\mathbb{P}}/J_{\mathbb{P}} \to \mathbb{P}'$. We have that $\psi(-1) = -1$, $\psi(X_p) = \phi(X_p)$, and $\psi(Y_p) = (1-\phi(p))^{-1}$ are all in $G'$, and so $\psi$ is a strong partial-field homomorphism. Furthermore, by construction we have that $\psi \circ i = \phi$, where $i : \mathbb{P} \to \mathbb{DP}$ is the inclusion map $i(p) = X_p$. 

    We now show uniqueness. Let $\psi' : \mathbb{DP} \to \mathbb{P}'$ be another strong partial-field homomorphism such that $\psi' \circ i = \phi$. Because of this relation, it has to be the case that $\psi'(X_p) = \phi(X_p)$ for $X_p \in \overline{G}$. Now, for any $Y_p \in \overline{Q}_{\mathbb{P}}$, in $D_{\mathbb{P}}/J_{\mathbb{P}}$ we have that $Y_p(1-X_p) =1$, and so $\psi'(Y_p)\cdot(1-\phi(X_p)) = 1$, and so $\psi'(Y_p) = (1-\phi(X_p))^{-1}$. And because there is only one ring homomorphisms from the integers to a ring, $\psi'(k) = k$ for $k \in \mathbb{Z}$. Therefore, $\psi' = \psi$.
\end{proof}

Equivalently, we can say that given a strong partial-field homomorphism $\phi : \mathbb{P} \to \mathbb{P}'$, there exists a unique strong partial-field homomorphism $\psi$ such that the following diagram commutes
\begin{equation*}
    \begin{tikzcd}
 \mathbb{P} \arrow[rd, "\phi"] \arrow[r, "i"] & \mathbb{DP} \arrow[d, "\psi"] \\
 & \mathbb{P}'
 \end{tikzcd}
\end{equation*}

\begin{thm}\label{thm: dowling lift idempotent}
    There is a strong partial field homomorphism between $\mathbb{D}^2\mathbb{P}$ and $\mathbb{DP}$.
\end{thm}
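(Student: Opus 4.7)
The plan is to invoke Proposition~\ref{prop: universal property of dowling} twice, exploiting the fact that the stated universal property is precisely the condition making $\mathbb{D}$ a reflection; reflections are formally idempotent.

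First, I would apply the universal property to the identity map $\mathrm{id}_{\mathbb{DP}} : \mathbb{DP} \to \mathbb{DP}$, regarded as a (trivially strong) partial-field homomorphism from $\mathbb{DP}$ to itself. Taking the ``$\mathbb{P}$'' of Proposition~\ref{prop: universal property of dowling} to be $\mathbb{DP}$ and the ``$\mathbb{P}'$'' to be $\mathbb{DP}$, this produces a unique strong partial-field homomorphism $\psi : \mathbb{D}^2\mathbb{P} \to \mathbb{DP}$ satisfying $\psi \circ i = \mathrm{id}_{\mathbb{DP}}$, where $i : \mathbb{DP} \to \mathbb{D}^2\mathbb{P}$ is the canonical inclusion. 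This already establishes the statement of the theorem.

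To upgrade $\psi$ to an isomorphism, as is done for the lift in Proposition~\ref{prop: lift is idempotent}, I would further show $i \circ \psi = \mathrm{id}_{\mathbb{D}^2\mathbb{P}}$ by a second application of the universal property. Apply Proposition~\ref{prop: universal property of dowling} to the strong partial-field homomorphism $i : \mathbb{DP} \to \mathbb{D}^2\mathbb{P}$; it produces a unique strong partial-field homomorphism $\mathbb{D}^2\mathbb{P} \to \mathbb{D}^2\mathbb{P}$ whose precomposition with $i$ equals $i$. Both $\mathrm{id}_{\mathbb{D}^2\mathbb{P}}$ and $i \circ \psi$ satisfy this condition: the former trivially, and the latter because $(i \circ \psi) \circ i = i \circ (\psi \circ i) = i \circ \mathrm{id}_{\mathbb{DP}} = i$. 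Uniqueness then forces $i \circ \psi = \mathrm{id}_{\mathbb{D}^2\mathbb{P}}$, so $\psi$ is a strong partial-field isomorphism with inverse $i$.

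I do not expect any significant obstacle. The argument is the standard categorical observation that a reflective localization is idempotent, and Proposition~\ref{prop: universal property of dowling} does all the heavy lifting. The only minor verification needed is that the identity $\mathrm{id}_{\mathbb{DP}}$ and the inclusion $i$ both genuinely qualify as strong partial-field homomorphisms in the sense of the paper, but this is immediate from the way $\mathbb{DP}$ is constructed from $\mathbb{P}$ via the subring $\mathbb{Z}[\overline{G}] \subseteq D_{\mathbb{P}}$.
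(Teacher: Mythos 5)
Your proposal is correct and follows essentially the same route as the paper: the paper also obtains $\psi$ by applying Proposition~\ref{prop: universal property of dowling} to $\mathrm{id}_{\mathbb{DP}}$, and then uses the uniqueness clause of the same proposition (applied to the inclusion $i_{\mathbb{DP}}$) to show that both $\mathrm{id}_{\mathbb{D}^2\mathbb{P}}$ and $i_{\mathbb{DP}}\circ\psi$ make the relevant triangle commute, hence coincide. Your extra remark verifying that $\mathrm{id}_{\mathbb{DP}}$ and $i$ are genuinely strong partial-field homomorphisms is a point the paper leaves implicit.
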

\begin{proof}
    By Proposition \ref{prop: universal property of dowling}, if we consider the identity map, which is a strong partial-field homomorphism, $\text{id}_{\mathbb{DP}} : \mathbb{DP} \to \mathbb{DP}$, there is a unique $\psi : \mathbb{D}^2\mathbb{P} \to \mathbb{DP}$ such that the following diagram commutes
    \begin{equation*}
    \begin{tikzcd}
 \mathbb{DP} \arrow[rd, "\text{id}_{\mathbb{DP}}"] \arrow[r, "i_{\mathbb{DP}}"] & \mathbb{D}^2\mathbb{P} \arrow[d, "\psi"] \\
 & \mathbb{DP}
 \end{tikzcd}
\end{equation*}
where $i_{\mathbb{DP}} : \mathbb{DP} \to \mathbb{D}^2\mathbb{P}$ is the inclusion map. We therefore have that $\text{id}_{\mathbb{DP}} = \psi \circ i_{\mathbb{DP}}$. It now suffices to show that $\text{id}_{\mathbb{D}^2\mathbb{P}} = i_{\mathbb{DP}}\circ \psi$.

Consider the following diagram
\begin{equation*}
    \begin{tikzcd}
 \mathbb{DP} \arrow[rd, "i_{\mathbb{DP}}"] \arrow[r, "i_{\mathbb{DP}}"] & \mathbb{D}^2\mathbb{P} \arrow[d, "h"] \\
 & \mathbb{D}^2\mathbb{P}
 \end{tikzcd}
\end{equation*}

    Consider the two maps $h_1 = \text{id}_{\mathbb{D}^2\mathbb{P}}$ and $h_2 = i_{\mathbb{DP}} \circ \psi$ where $i_{\mathbb{DP}}$ and $\psi$ are defined above. Because both of these would make the above diagram commute, by Proposition \ref{prop: universal property of dowling}, we have that $h_1=h_2$ and so we obtain the desired result. 
\end{proof}

\section{Acknowledgments}

Thanks to Daniel Irving Bernstein for the productive discussions involving partial fields and for insightful feedback. 

\bibliographystyle{abbrv}
\bibliography{refs}

\end{document}